\numberwithin{equation}{section}
\let\blb\mathbb
\def\CC{{\blb C}}
\def \PP{{\blb P}}
\def \ZZ{{\blb Z}}
\def \LL{{\blb L}}
\def \OO{{\blb O}}
\let\cal\mathcal
\def\Cc{{\cal C}}
\def\Ec{{\cal E}}
\def\Fc{{\cal F}}
\def\Gc{{\cal G}}
\def\Ic{{\cal I}}
\def\Lc{{\cal L}}
\def\Mc{{\cal M}}
\def\Nc{{\cal N}}
\def\Oc{{\cal O}}
\def\Pc{{\cal P}}
\def\Qc{{\cal Q}}
\def\Sc{{\cal S}}
\def\Uc{{\cal U}}
\def\Xc{{\cal X}}
\newtheorem{lemma}{Lemma}[section]
\newtheorem{proposition}[lemma]{Proposition}
\newtheorem{theorem}[lemma]{Theorem}
\newtheorem{corollary}[lemma]{Corollary}
\newtheorem{definition}[lemma]{Definition}
\newtheorem{conjecture}[lemma]{Conjecture}
\theoremstyle{remark}
\newtheorem{remark}[lemma]{Remark}
\def\dbcoh{D^b\operatorname{Coh}}
\def\wt{\widetilde}
\def\arw{\longrightarrow}
\def\Hom{\operatorname{Hom}}
\def\Ext{\operatorname{Ext}}
\def\Aut{\operatorname{Aut}}
\def\deg{\operatorname{deg}}
\def\proj{\operatorname{Proj}}
\def\sym{\operatorname{Sym}}
\DeclareMathOperator{\sHom}{\mathscr{H}\text{\kern -3pt {\calligra\large om}}\,}
\newcommand{\customlabel}[2]{%
   \protected@write \@auxout {}{\string \newlabel {#1}{{#2}{\thepage}{#2}{#1}{}} }%
   \hypertarget{#1}{#2}
}
\title{Mukai duality via roofs of projective bundles}
\author{Micha\l\ Kapustka and Marco Rampazzo}
\begin{document}
\maketitle

\begin{abstract}
We investigate a construction providing pairs of Calabi--Yau varieties described as zero loci of pushforwards of a hyperplane section on a roof as described in \cite{kanemitsu}. We discuss the implications of such construction at the level of Hodge equivalence, derived equivalence and $\LL$-equivalence. For the case of K3 surfaces, we provide alternative interpretations for the Fourier--Mukai duality in the family of K3 surfaces of degree 12 of \cite{MukaiK3}. In all these constructions the derived equivalence lifts to an equivalence of matrix factorizations categories.
\end{abstract}

\section{Introduction}\label{introduction}

The construction of non-isomorphic but derived equivalent pairs of varieties with vanishing first Chern class has been object of a recent flurry of articles (\cite{imou}, \cite{or}, \cite{bcp}, \cite{hl}, \cite{kr}) originated from the Pfaffian--Grassmannian equivalence \cite{rodland} and developed alongside with the notion of phase transition of a gauged linear sigma model, a physical phenomenon connecting separate theories via a process which has been mathematically described by means of variation of GIT.\\
\\
From a merely mathematical perspective, the existence of so-called \textit{multiple geometric phases} led to the construction of several instances of derived equivalence between non isomorphic varieties, while the geometric description of many of such pairs allowed the establishment of $\LL$-equivalence, which is a relation in the Grothendieck ring of varieties given by the difference of the classes of such varieties annihilating a power of the class of the affine line, such as in \cite{imou}. The interplay between derived equivalence and $\LL$-equivalence has been object of conjectures as described in \cite{ks}.\\
\\
A recurring pattern emerges from some particularly symmetric examples of derived equivalence and $\LL$-equivalence constructions where the pair is defined by the two pushforwards of a hyperplane section of a smooth Fano variety given by an incidence correspondence, along its two natural surjections. An interesting natural setup arises from some Fano varieties investigated by Kanemitsu in \cite{kanemitsu}, called \emph{roofs}. In this class lie, for example, the constructions of \cite{imou}, \cite{kr}. Moreover, these roof diagrams are a natural setup  for testing the DK conjecture of \cite{bondalorlovdk}, \cite{kawamatadk}.\\
\\
In this paper, we investigate constructions of pairs of Calabi--Yau varieties emerging from roofs from the point of view of Hodge equivalence, derived equivalence and $\LL$-equivalence. We first describe a Hodge isometry at the level of middle cohomology which, for example in the case of K3 surfaces, permits us to prove that the surfaces are derived equivalent but not isomorphic. $\LL$-equivalence for the related Calabi--Yau pair is easily proven in all known cases, however a general proof is missing.\\
\\
As a working  example of the above construction we investigate the non-homogeneous case given by the pair of K3 surfaces cut out by sections of a twisted Ottaviani bundle on a five dimensional quadric in $\PP^6$: the sections describing the surfaces are pushforwards of a hyperplane section of the projectivization of such an Ottaviani bundle. By means of this construction, we provide a description of the general K3 surface of degree 12 projected to $\PP^6$, and we give an alternative formulation of the self-duality of the family of K3 surfaces studied already by Mukai in \cite{MukaiK3} and later in \cite{imouk3} and \cite{hl}. This answers a question posed in \cite[Rem.4.2]{imouk3}. 
\\
\\
The paper is organized as follows. In Section \ref{roofs} we recall the definition of roofs of $\PP^{r-1}$-bundles given by Kanemitsu in \cite{kanemitsu}. Therefore we present a construction to associate a pair of Calabi--Yau manifolds to a hyperplane section in a roof $X$, and describe how such pairs should be $\LL$-equivalent.
In Section \ref{hodge}, given a roof $X$ and a hyperplane $M$ giving rise to a pair $Y, \wt Y$ of Calabi--Yau varieties, we construct two direct sum decompositions of the middle cohomology of $M$ containing the middle cohomology of respectively $Y$ and $\wt Y$.
Specializing this picture to pairs of K3 surfaces, we prove that the two direct sum decompositions of the middle cohomology of $M$ provide a Hodge isometry between the transcendental lattices of the K3 surfaces. Hence, pairs of K3 surfaces related by a roof are derived equivalent by the derived global Torelli theorem. 
In Section \ref{k3s} we introduce the pairs of K3 surfaces of degree 12 in $\PP^6$ and prove that they are pairs of general K3 surfaces of degree 12. Furthermore, we show that the general pair of K3 surfaces related by a roof is not isomorphic. We prove it by describing the action of the Hodge isometry on the discriminant group of the transcendental lattice of such K3 surfaces. We study the associated Fourier--Mukai kernel in Section \ref{kernels}, in light of the conjecture of Kuznetsov and Shinder. In Section \ref{branes} we observe that the obtained derived equivalences lift to equivalences of matrix factorization categories by means of an application of Kn\"orrer periodicity. 

\subsection*{Acknowledgments}
We are thankful to Atsushi Ito, Makoto Miura, Shinnosuke Okawa, Kazushi Ueda and Akihiro Kanemitsu for giving interesting comments and corrections to the first draft.
Part of the work has been developed while the second author was at Institut de Mathématiques de Toulouse with Laurent Manivel, which also participated in the beginning of the project, we would like to express our gratitude to him for many valuable insights.
We would also like to thank Claudio Onorati for clarifying discussions, and the anonymous referee for the very careful reading and important comments. The first author is supported by the Polish National Science Center project number 2018/31/B/ST1/02857. 
The second author is supported by the PhD program at the University of Stavanger.


\section{Roofs and Calabi--Yau pairs}\label{roofs}
In this paper we work over the field of complex numbers. We shall use the notation $\PP(\Ec):=\proj(\sym \Ec^\vee)$, whenever $\Ec$ is a vector bundle or a vector space.
\begin{definition}\cite[Definition 0.1]{kanemitsu}
A simple Mukai pair $(\Ec, B)$ of dimension $n$ and rank $r$ is the data of a Fano $n$-fold $B$ of Picard number one  and an ample vector bundle $\Ec$ of rank $r$ such that:
\begin{enumerate}
    \item $c_1(\Ec)=c_1(B)$
    \item $\PP(\Ec^\vee)$ admits a second $\PP^{r-1}$-bundle structure.
\end{enumerate}
\end{definition}
\begin{definition}\cite[Definition 0.1]{kanemitsu}
A roof of dimension $n+r-1$ is a Fano manifold $X$  isomorphic to the projectivization of a simple Mukai pair $(\Ec, B)$ of dimension $n$ and rank $r$.
\end{definition}
A Fano manifold $X$ is a roof if and only if it has Picard number two, it admits two different  $\PP^{r-1}$-bundle structures and it has index $r$. Moreover, in this case there exists a line bundle $\Lc$ on $X$ which restricts to $\Oc(1)$ on all the fibers of both the $\PP^{r-1}$-fibrations \cite[Proposition 1.5]{kanemitsu}.\\

\begin{lemma}\label{ksi}
Let $\xi$ be the class of $\mathcal L$ in $H^2(X,\mathbb Z)$. Let furthermore $h_1$ and $h_2$ be respective pullbacks of the hyperplane classes on the bases $B_1$ and $B_2$. Then there exists $k\in \mathbb{Z}$ such that $k\xi=h_1+h_2$.
\end{lemma}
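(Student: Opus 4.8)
The plan is to analyze the Picard group (equivalently $H^2(X,\ZZ)$, since $X$ is a smooth Fano and hence simply connected with torsion-free $H^2$) using the two projective bundle structures. Let $\pi_i\colon X \to B_i$ be the two $\PP^{r-1}$-bundle projections, $i=1,2$. Since $B_i$ has Picard number one, write $\Pic(B_i)=\ZZ H_i'$ for an ample generator, and let $h_i = \pi_i^* H_i'$. For a projective bundle $\pi_i\colon X=\PP(\Fc_i)\to B_i$ the relative Euler sequence gives $\Pic(X) = \pi_i^*\Pic(B_i) \oplus \ZZ\, \zeta_i$, where $\zeta_i = c_1(\Oc_{\PP(\Fc_i)}(1))$ is the relative hyperplane class. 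Thus $\{h_1,\zeta_1\}$ and $\{h_2,\zeta_2\}$ are each $\ZZ$-bases of the rank two lattice $H^2(X,\ZZ)$.

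Next I would pin down $\xi$. By \cite[Proposition 1.5]{kanemitsu}, $\Lc$ restricts to $\Oc(1)$ on the fibers of both fibrations; restricting to a fiber $F_i\cong\PP^{r-1}$ of $\pi_i$ kills $h_i$ and sends $\zeta_i$ to the hyperplane class, so in the basis $\{h_i,\zeta_i\}$ we have $\xi = \zeta_i + a_i h_i$ for some $a_i\in\ZZ$, $i=1,2$. Now change basis: expressing the basis $\{h_2,\zeta_2\}$ in terms of $\{h_1,\zeta_1\}$ and using that both are $\ZZ$-bases, the transition matrix lies in $GL_2(\ZZ)$; combined with $\zeta_2 = \xi - a_2 h_2$ and the fact that $h_2$ is not proportional to $h_1$ (the two fibrations are distinct, so their fiber classes — which are the classes Poincaré-dual to $h_i^{n}$ up to normalization — are independent, forcing $h_1,h_2$ to be independent in $H^2$ as well), one deduces that $h_1,h_2$ form a $\ZZ$-basis of $H^2(X,\ZZ)$ or at least that $\{h_1,h_2\}$ generates a finite-index sublattice controlled by the above relations. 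Writing $\xi = p h_1 + q h_2$ in this setup and comparing with $\xi=\zeta_1+a_1h_1=\zeta_2+a_2h_2$, the structure of the transition matrix forces $p=q$: indeed $\xi$ restricted to a fiber of either fibration is $\Oc(1)$, which by symmetry of the two roles is the only constraint, and the index condition (index $r$, i.e. $-K_X = r\xi$ since $\Lc$ is the pseudo-ample generator with $-K_X$ a multiple of it by \cite{kanemitsu}) together with $-K_X = r\zeta_i + (\text{something})h_i$ from the relative canonical bundle formula $K_X = \pi_i^*(K_{B_i}\otimes\det\Fc_i) - r\zeta_i$ and $c_1(\Ec)=c_1(B)$ pins the coefficients. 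Concretely, $-K_X$ is a positive multiple of $h_1+h_2$ (each $\pi_i$ contracts the classes orthogonal to $h_i$, so $-K_X\cdot(\text{fiber of }\pi_i)=r>0$ shows $-K_X$ has positive $\zeta_i$-coefficient, and symmetry in the two rulings forces it proportional to $h_1+h_2$), and $-K_X = r\xi$, so $r\xi = c(h_1+h_2)$ for some positive integer $c$; setting $k$ so that $k\xi = h_1+h_2$ after dividing (one checks $r\mid c$, or rather that $h_1+h_2$ is itself a multiple of $\xi$ directly), we are done.

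A cleaner route, which I would actually carry out: restrict $h_1+h_2$ to a fiber $F_1$ of $\pi_1$. Then $h_1|_{F_1}=0$ and $h_2|_{F_1}$ is the restriction of an ample class, hence a positive multiple $m$ of the hyperplane class on $F_1\cong\PP^{r-1}$; but $\Pic(F_1)=\ZZ$ and $\xi|_{F_1}=\Oc(1)$, so $(h_1+h_2)|_{F_1} = m\,\xi|_{F_1}$. By the same argument on $F_2$, $(h_1+h_2)|_{F_2}=m'\,\xi|_{F_2}$. Since $\{h_1|_{F_2},\zeta_2|_{F_2}\}$ generate $\Pic(F_2)$-plus-normal data, matching the decompositions $H^2(X,\ZZ)=\ZZ h_i\oplus\ZZ\zeta_i$ shows that $h_1+h_2 - k\xi$ restricts to zero on the fibers of both $\pi_1$ and $\pi_2$ for a suitable single integer $k$ (namely $k=m=m'$, equality of the two forced by writing everything in one fixed basis of $H^2(X,\ZZ)$). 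A class in $H^2(X,\ZZ)$ that is trivial on all fibers of $\pi_1$ is a pullback $\pi_1^*\alpha$ with $\alpha\in H^2(B_1,\ZZ)=\ZZ H_1'$, i.e. a multiple of $h_1$; if it is moreover trivial on all fibers of $\pi_2$ it is likewise a multiple of $h_2$; but $h_1$ and $h_2$ are not proportional, so the only such class is $0$. Hence $h_1+h_2 = k\xi$.

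The main obstacle is the bookkeeping needed to see that the integer $k$ coming from "$h_1+h_2 = m\xi$ on fibers of $\pi_1$" agrees with the one coming from fibers of $\pi_2$ — i.e. that a single global $k$ works — and, relatedly, verifying carefully that $h_1$ and $h_2$ are $\ZZ$-independent in $H^2(X,\ZZ)$ (so that the "trivial on all fibers of both" class is genuinely $0$ and not merely torsion; torsion is excluded since $X$ is Fano, so this is automatic once independence is known). Independence of $h_1,h_2$ follows because if $h_2 = \lambda h_1$ then $\pi_2^*\Pic(B_2)=\pi_1^*\Pic(B_1)$ inside $H^2$, which would make the two $\PP^{r-1}$-bundle structures share the same "horizontal" subgroup and ultimately coincide, contradicting that $X$ is a roof (the two structures are genuinely different). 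I would spell that last implication out using that the fibration $\pi_i$ is recovered from $X$ as the contraction of the extremal ray on which $h_i$ is trivial, and in the Mori cone of a roof these two extremal rays are distinct by definition.
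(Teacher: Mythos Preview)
Your proposal circles the same idea as the paper's proof but leaves the decisive step unjustified. The paper argues in two lines: since $(\xi, h_1)$ and $(\xi, h_2)$ are both $\ZZ$-bases of $\operatorname{Pic}(X)$, writing $h_2 = a\xi + bh_1$ forces $b = \pm 1$ by the change-of-basis determinant; then the fact that $\xi$ meets general fibers of both fibrations in one point (combined implicitly with nefness of the $h_i$) excludes $b = +1$, so $h_1 + h_2 = a\xi$.

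Your ``cleaner route'' is this same argument in disguise --- your integers $m = h_2\cdot\ell_1$ and $m' = h_1\cdot\ell_2$ play the role of the paper's $a$, and the question whether $m = m'$ is exactly the question whether $b = -1$ or $b = +1$. The gap is your claim that $m = m'$ is ``forced by writing everything in one fixed basis of $H^2(X,\ZZ)$''. It is not: lattice considerations alone yield only $m = \pm m'$. What rules out $m = -m'$ is that both are strictly positive, which you do assert earlier --- though for a slightly wrong reason ($h_2$ is \emph{not} ample on $X$, only nef; the correct reason $m>0$ is that $\ell_1,\ell_2$ span distinct extremal rays, so $h_2\cdot\ell_1\neq 0$, and nefness then gives $>0$). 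You have both ingredients in hand but never put them together, and indeed you flag this very point as ``the main obstacle'' without resolving it. Your first approach via $-K_X$ and ``symmetry in the two rulings'' is not an argument either: $B_1$ and $B_2$ need not even be isomorphic, and while the Mukai condition $c_1(\Ec)=c_1(B)$ does give $-K_X = r\xi$, that alone does not show $h_1+h_2$ is proportional to $\xi$.
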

\begin{proof} Note that by construction both the pairs $(\xi, h_1)$ and $(\xi,h_2)$ generate the Picard group of $X$. If we hence write $h_2=a\xi +b h_1$ with $a,b\in \mathbb{Z}$ then $b=\pm 1$ by the fact $(\xi,h_2)$ generates $Pic(X)$. Since $\xi$ meets general fibers of both fibrations in one point we have $b=-1$.
\end{proof}

\begin{remark}
Note that in all known cases  Lemma \ref{ksi} is true with $k=1$.
\end{remark}
Let us now introduce the following construction:
\begin{definition}\label{CYpair}
Let $X$ be a roof of dimension $n+r-1$ and let $(\Ec, B)$, $(\wt \Ec, \wt B)$ be two Mukai pairs such that $X\simeq \PP(\Ec^\vee)\simeq\PP(\wt\Ec^\vee)$. We call Calabi--Yau pair associated to the roof a pair $(Y, \wt Y)$ of Calabi--Yau $n-r$--folds such that there exists $S\in H^0(X, \mathcal{L})$ yielding $Y=Z(\pi_*S)$ and $\wt Y=Z(\wt\pi_*S)$, where $\pi:X\arw B$ and $\wt\pi:X\arw\wt B$ are the projective bundle maps.
\end{definition}
The data of such Calabi--Yau pair can be summarized by the following diagram:
\begin{equation}\label{generalpair}
\begin{tikzcd}
 & p^{-1}(Y)\arrow[hook]{r}{j}\arrow[swap]{lddd}{p|_{p^{-1}(Y)}}& M\arrow[hook]{dd}\arrow{lddd}[swap]{p}\arrow{rddd}{\wt p}\arrow[hookleftarrow]{r}{\wt j}& \wt p^{-1}(\wt Y)\arrow{rddd}{\wt p|_{\wt p^{-1}(\wt Y)}} & \\
 & & & & \\
 & & X\arrow{ld}{\pi} \arrow{rd}[swap]{\wt\pi} & & \\
Y\arrow[hookrightarrow]{r} & B & & \wt B\arrow[hookleftarrow]{r} & \wt Y\\
\end{tikzcd}
\end{equation}
where we denote by $M$ the zero locus of $S$.\\
Roofs have been partially classified by Kanemitsu \cite[Theorem 5.12]{kanemitsu}. In particular, the classification is complete for roofs yielding Calabi--Yau pairs of dimension $d\leq 2$, for roofs of dimension $n+r-1\leq 7$ and for roofs of $\PP^1$-bundles.\\
Except for one specific case that will be discussed later, all known roofs are homogeneous varieties. In fact, they are classified by the Lie algebra type of the automorphism group.\\
\\
It is a natural question to ask whether every roof provides pairs of derived equivalent Calabi--Yau manifolds. Such conjecture, which we state here below, is supported by several worked examples, despite the lack of a general proof.
\begin{conjecture}\label{conjecture}
Let $X$ be a roof as in Diagram $\ref{generalpair}$. Then there exists a derived equivalence $\dbcoh(Y)\simeq\dbcoh(\wt Y)$, where $(Y,\wt Y)$ is a Calabi--Yau pair associated to $X$ in the sense of Definition $\ref{CYpair}$.
\end{conjecture}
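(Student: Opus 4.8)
\medskip
\noindent\emph{Sketch of a possible approach to Conjecture~\ref{conjecture}.}
The plan is to recognise $\dbcoh(Y)$ and $\dbcoh(\wt Y)$ as one and the same distinguished piece of a category attached to $X$ together with the section $S$, and to produce the equivalence by a wall-crossing argument in three stages. First, a Landau--Ginzburg reformulation: the section $S$ pushes forward along $\pi\colon X\simeq\PP(\Ec^\vee)\to B$ to a regular section $s:=\pi_*S$ of $\Ec$ with $Y=Z(s)$, and symmetrically $\wt s:=\wt\pi_*S$ is a regular section of $\wt\Ec$ with $\wt Y=Z(\wt s)$; by the theorem of Isik and Shipman, in the graded equivariant form developed by Hirano, $\dbcoh(Y)$ is equivalent to the graded singularity category of the gauged Landau--Ginzburg model carried by the total space $\mathrm{tot}_B(\Ec^\vee)$ with potential $W_s$ given by pairing a covector with $s$, and likewise $\dbcoh(\wt Y)$ with $(\mathrm{tot}_{\wt B}(\wt\Ec^\vee),W_{\wt s})$. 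It therefore suffices to compare these two gauged Landau--Ginzburg models.

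Second, one realises them as two chambers of a common GIT problem. The roof structure on $X$ is itself a variation of GIT: by \cite[Prop.~1.5]{kanemitsu} and Lemma~\ref{ksi} the group $\mathrm{NS}(X)$ is generated by $\xi$ together with either of $h_1,h_2$, with $k\xi=h_1+h_2$, and the two $\PP^{r-1}$-bundle structures are the two chambers of the stability space of a ``master space'' $\mathscr Z$ for a rank-one torus whose two relevant GIT quotients are $\PP(\Ec^\vee)\cong X\cong\PP(\wt\Ec^\vee)$; for every homogeneous roof this $\mathscr Z$ can be written down from the two maximal-parabolic contractions of the corresponding $G/P$. Adjoining the fibre directions of the dual bundles produces an affine GIT problem for a two-dimensional torus whose two relevant quotients are $\mathrm{tot}_B(\Ec^\vee)$ and $\mathrm{tot}_{\wt B}(\wt\Ec^\vee)$, and on which $S$ determines a single invariant potential restricting to $W_s$ and to $W_{\wt s}$ on the two sides.

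Third, one applies a window equivalence. I would invoke the variation-of-GIT theorems for derived categories of Ballard--Favero--Katzarkov --- equivalently the ``magic windows'' of Halpern-Leistner, or Segal's Landau--Ginzburg versions --- at the wall separating the two chambers. The input that upgrades the wall-crossing functor from fully faithful to an equivalence is that the wall is \emph{balanced}, i.e.\ the sum of the positive and the sum of the negative torus weights on the normal bundle to the relevant fixed locus agree; this is precisely the condition $c_1(\Ec)=c_1(B)$ (and $c_1(\wt\Ec)=c_1(\wt B)$) in the definition of a simple Mukai pair, the Calabi--Yau condition read off via adjunction. Since the equivalence commutes with the invariant potential, it descends to an equivalence of the graded singularity categories of $(\mathrm{tot}_B(\Ec^\vee),W_s)$ and $(\mathrm{tot}_{\wt B}(\wt\Ec^\vee),W_{\wt s})$, and composing with the first stage yields $\dbcoh(Y)\simeq\dbcoh(\wt Y)$.

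The hard part will be the second stage: there is at present no uniform construction of the master space $\mathscr Z$ valid for \emph{all} roofs, Kanemitsu's classification is incomplete, and the one known non-homogeneous roof --- the projectivization of a twisted Ottaviani bundle on a five-dimensional quadric studied in Section~\ref{k3s} --- does not obviously arise as a linear GIT quotient. For it one would instead have to produce an explicit Fourier--Mukai kernel, in the spirit of \cite{ks} and \cite{kr}, or argue directly inside $\dbcoh(M)$: the map $p\colon M\to B$ is a $\PP^{r-2}$-bundle away from $Y$ and a $\PP^{r-1}$-bundle over $Y$, which should yield a semiorthogonal decomposition of $\dbcoh(M)$ into several copies of $\dbcoh(B)$ and one copy of $\dbcoh(Y)$, and symmetrically for $\wt p$, after which one transforms one decomposition into the other by a (currently case-dependent) sequence of mutations. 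A difficulty common to both routes is bookkeeping: one must check that the subcategory singled out by the grade-restriction rule, respectively by the mutations, lands on $\dbcoh(Y)$ itself and not on a nontrivial twist or a proper subcategory --- which amounts to matching the width of the window with the rank $r$ and the class $\xi$ of Lemma~\ref{ksi}.
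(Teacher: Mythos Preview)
The statement you are addressing is a \emph{conjecture}: the paper does not prove it in general, and explicitly says so (``despite the lack of a general proof''). There is therefore no proof in the paper against which to measure your proposal. What the paper \emph{does} prove is the special case $n-r=2$ with rational homogeneous bases (Corollary~\ref{derivedequivalenceK3}), and the method there is entirely different from yours: it is Hodge-theoretic, establishing an isometry $T_Y\simeq T_{\wt Y}$ of transcendental lattices via the cohomological Cayley trick (Theorem~\ref{integralcohomology} and Lemma~\ref{polarized}) and then invoking the derived global Torelli theorem for K3 surfaces. No GIT, no windows, no Landau--Ginzburg model enters that argument.

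Your sketch is a coherent research programme --- and indeed your first stage (Kn\"orrer periodicity identifying $\dbcoh(Y)$ with a matrix-factorisation category on $\mathrm{tot}(\Ec^\vee)$) is exactly what the paper uses in Section~\ref{branes}, though there it is used in the opposite direction: \emph{given} a derived equivalence $\dbcoh(Y)\simeq\dbcoh(\wt Y)$ already established by other means, Kn\"orrer periodicity lifts it to an equivalence of matrix-factorisation categories. The paper does not attempt your second and third stages, and the Remark following Conjecture~\ref{conjecture} makes clear that the authors regard the flop/DK side of the story (which your window argument would address) as equally open, with only case-by-case verifications available (\cite{uedaflop}, \cite{morimuraflop}).

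You yourself identify the genuine gap: there is no uniform construction of the master space, and the balanced-wall verification has not been carried out in general. So your proposal is honest about its status --- it is a strategy, not a proof --- and in that sense there is nothing to correct; but it should not be read as settling the conjecture, and it does not reproduce anything the paper claims to have proved.
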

\begin{remark}
The DK conjecture states that if two smooth projective varieties are related by a flop, they are derived equivalent \cite[Conjecture 1.2]{kawamatadk}, \cite[Conjecture 4.4]{bondalorlovdk}. In this context, Conjecture \ref{conjecture} is particularly interesting if we observe that the total spaces $\Ec^\vee$ and $\wt\Ec^\vee$ are related by a flop. A positive answer to  the DK conjecture for such flops has been given for the roof of type $G_2$ by \cite{uedaflop}, and for the roofs of type $C_2$ and $A_4$ by \cite{morimuraflop}, but again a general proof of the validity of the DK conjecture for bundles related by a roof is missing.\\
The problem of finding a derived equivalence for the total spaces is strictly related to proving that the Calabi--Yau zero loci are derived equivalent: in fact, one has the following diagram:
\begin{equation*}
    \begin{tikzcd}
            &     \mathcal L^{\vee}  
            \ar[swap]{ld}{f}\ar{rd}{g}    
            &      \\
    \Ec^\vee&  X\ar{ld}\ar{rd}\ar[hook]{u}         &\wt\Ec^\vee\\
   \hspace{-20pt}Y\subset B\ar[hook]{u}      &     &\wt B\ar[hook]{u}\supset \wt Y\hspace{-20pt}
    \end{tikzcd}
\end{equation*}
where $f$ and $g$ are blowups of respectively $\Ec^\vee$ in $B$ and $\wt\Ec^\vee$ in $\wt B$, the bases are embedded in the total spaces as zero sections. Then it is possible to write the derived category of the total space of $\mathcal L^{\vee}$
in two ways, each of them being a semiorthogonal decomposition containing a twist of $\dbcoh(\Ec^\vee)$ and $r-1$ twists of $\dbcoh (B)$, or a totally similar decomposition on the other side of the diagram. This picture mirrors the one given by Diagram \ref{generalpair}: in fact, in the existing worked examples, the strategy of the proof adopted for the total spaces is the same that has been used for the zero loci (see for example the relation of \cite{morimuraflop} with \cite{kr} and of \cite{uedaflop} with \cite{Kuznetsov}).
\end{remark}

\subsection{\texorpdfstring{$\LL$}{L}-equivalence in the Grothendieck ring of varieties}

We observe that the fibers of the surjection $p$ have the following description:
\begin{equation*}
p^{-1}(y) \simeq \left\{
\begin{array}{ll}
\PP^{r-1} & \operatorname{if} y\in Y\\
\PP^{r-2} &\operatorname{if} y\in B\setminus Y
\end{array}
\right.
\end{equation*}
due to the fact that $Y$ is the zero locus of the pushforward of $S$ to $B$. Clearly, the same holds for $\wt p$, $\wt B$ and $\wt Y$.\\
\\
Let us consider the roof from the point of view of the Grothendieck ring of varieties. First of all observe that the bases $B$ and $\wt B$ are stably birational i.e. by results of \cite[Section 1.7]{LarsenLunts} they have equal class in the quotient of the Grothendieck ring by the ideal generated by the $\mathbb{L}$. 

However, if we assume the stronger condition that $B$ and $\wt B$ have equal class in the Grothendieck ring we get an interesting consequence, namely the difference of the classes of a pair of Calabi--Yau varieties associated to the roof  annihilates the $r-1$-th power of the class of the affine line.

Indeed, the map $p$ is a piecewise trivial fibration. This is a consequence of the fact that $p^{-1}(Y) = \PP(\Ec^\vee|_Y)$, and that $p^{-1}(B\setminus Y)= \PP((\Ec|_{B\setminus Y}/(\pi_*S|_{B\setminus Y}\otimes\Oc_B|_{B\setminus Y}))^\vee)$, the same holds for $\wt\pi$.\\
Now, let $M$ be the hyperplane section of $X$ defining the pair. Then we have the following two descriptions of the class of $M$:
$$
[M] = [\PP^{r-1}]\cdot [Y]+[\PP^{r-2}]\cdot [B\setminus Y] 
$$
$$
[M] = [\PP^{r-1}]\cdot [\wt Y]+[\PP^{r-2}]\cdot [\wt B\setminus \wt Y] 
$$
By the relations defining the Grothendieck ring of varieties we have $[B\setminus Y] = [B]-[Y]$, and the same holds for the second equation. Then, subtracting the two equations above, we get
\begin{equation*}
\begin{split}
0&=  [\PP^{r-1}]\cdot [Y]+[\PP^{r-2}]\cdot( [B]-[Y] ) - [\PP^{r-1}]\cdot [\wt Y] - [\PP^{r-2}]\cdot ([\wt B]-[\wt Y] ) \\
&= ([\PP^{r-1}]-[\PP^{r-2}])\cdot([Y]-[\wt Y]) + [\PP^{r-2}]\cdot([B]-[\wt B]).
\end{split}
\end{equation*}
Given the identity
$$
[\PP^k] = 1+\LL + \LL^2+\cdots\LL^k,
$$
we have:
\begin{equation}\label{lequivalencegeneral}
([Y]-[\wt Y])\cdot\LL^{r-1} + [\PP^{r-2}]\cdot([B]-[\wt B])=0.
\end{equation}
\begin{remark}
In most of the known examples of roofs, Equation \ref{lequivalencegeneral} provides $\LL$-equivalence for the associated Calabi--Yau pairs because the bases $B$, $\wt B$ of the roof are isomorphic. This is the case of roofs of type $A^M_k$, $A^G_{2k}$, $D_k$ and $G_2^\dagger$. For the roof of type $G_2$, it has been proved that $[B]=[\wt B]=[\PP^5]$ in  \cite[Proposition 2.2]{imou}, and the $\LL$-equivalence follows. Note that for any roof we must have  $([B]-[\wt B])\cdot[\PP^{r-1}] =0$, which suggests $[B]-[\wt B]=0$. However, although we are not aware of any counterexample, we are unable to prove this fact.
\end{remark}


\section{Hodge structures}\label{hodge}
In this section we generalize the blowup formula for cohomology in order to compare Hodge structures in the algebraic middle cohomologies of Calabi--Yau pairs associated to a roof.

\subsection{Cohomological Cayley trick}
Let $X$ be a roof such as in Diagram \ref{generalpair} and let $(Y, \wt Y)$ be a Calabi--Yau pair associated to a smooth hyperplane section $M$. The scope of this section is to give a dual description of the cohomology of $M$ in terms, respectively, of the cohomologies of $Y$ and $B$ and the cohomologies of $\wt Y$ and $\wt B$.\\
\\
More generally, given a Mukai pair $(\Ec, B)$ and a hyperplane section $M\subset X\simeq \PP(\Ec^\vee)$, we establish the following diagram, which will be the setting of Theorem \ref{integralcohomology}:
\begin{equation*}
\begin{tikzcd}[row sep = large, column sep = large]
p^{-1}(Y)\arrow{d}{q}\arrow[hook]{r}{j} & M \arrow[hook]{r}\arrow[swap]{d}{p} & X\arrow{dl}{\pi}\\
Y\arrow[hook]{r}& B					&
\end{tikzcd}
\end{equation*}
where we assume $Y = Z(\pi_*S)$ smooth, $S$ is the section cutting out $M$ and $p$ is a fibration such that:
\begin{equation*}
p^{-1}(x) \simeq \left\{
\begin{array}{ll} \mathbb P^{r-1}& x\in Y\\\mathbb P^{r-2} & x\in B\setminus Y
\end{array}
\right.
\end{equation*}
and $q$ is the restriction of $p$ to the preimage $p^{-1}(Y)$.\\
\\
\begin{theorem}[Cohomological Cayley trick]\label{integralcohomology}
Let $B$ be a K\"ahler manifold of  dimension $n$, let $\Ec$ be a vector bundle of rank $r$ over $B$ and $X=\PP(\Ec^\vee)$. Then, given a smooth hyperplane section $M=Z(S)\subset X$ and a smooth variety $Y = Z(\pi_{*}S)\subset B$ of codimension $r$, there exists an isomorphism of Hodge structures
\begin{equation}\label{decomposition}
\begin{tikzcd}
\displaystyle \Xi: \bigoplus_{i=0}^{r-2}H^{k-2i}(B, \mathbb Z) \oplus H^{k-2r+2}(Y,\mathbb Z) \arrow{r}{\sim} & H^{k}(M,\mathbb Z)
\end{tikzcd}
\end{equation}
which acts on classes in the following way:
\begin{equation*}
\begin{tikzcd}
(x_{0},\dots x_{r-2},z)\arrow{r} & p^*x_{0} + p^*x_{1} \cup \xi +\dots +p^{*}x_{r-2}\cup\xi^{r-2}+ j_*q^*z
\end{tikzcd}
\end{equation*}
where $\xi\in H^2(M,\ZZ)$ is the restriction to $M$ of the hyperplane class of $X$ related to the Grothendieck line bundle $\Oc_X(1)$.
\end{theorem}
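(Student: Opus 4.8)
The plan is to realize $M$ as (the Cayley hypersurface of) a vector bundle section and reduce the statement to the classical blowup formula for cohomology together with the projective bundle formula. Concretely, set $\Ec' := \Ec / (\pi_* S)$, a sheaf on $B$ which is locally free of rank $r-1$ away from $Y$ and which has a jump along $Y = Z(\pi_* S)$. The point is that the fibration $p\colon M \to B$ from the diagram is exactly $\PP(\Ec'^\vee) \to B$ over $B \setminus Y$, whereas over $Y$ it is all of $\PP(\Ec^\vee|_Y)$; more precisely, $M$ is obtained from $\PP(\Ec'^\vee)$ (interpreted on the nose away from $Y$) by replacing the generic $\PP^{r-2}$-fiber by a $\PP^{r-1}$-fiber over $Y$. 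This is the standard geometry of the Cayley trick: $M = Z(S)$ with $S$ a section of $\Oc_X(1) = \Lc$, and the locus where $S$ vanishes \emph{along an entire fiber} of $\pi$ is precisely $Y$.

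The main step is then to identify $M$ with a blowup. I would argue that $M$ is isomorphic to the blowup $\mathrm{Bl}_Y(\PP(\Ec'^\vee))$ — or dually, to a blowup naturally sitting over $B$ — with exceptional divisor $E = p^{-1}(Y) = \PP(\Ec^\vee|_Y) = p^{-1}(Y)$, and $q = p|_E\colon E \to Y$ is the $\PP^{r-1}$-bundle of the diagram. Granting this, the classical blowup formula (Voisin, \emph{Hodge Theory and Complex Algebraic Geometry I}, Thm.~7.31) gives an isomorphism of Hodge structures
\begin{equation*}
H^k(M,\ZZ) \;\simeq\; H^k(\PP(\Ec'^\vee),\ZZ) \;\oplus\; \bigoplus_{i=0}^{r-3} H^{k-2r+2+2i}(Y,\ZZ),
\end{equation*}
and the projective bundle formula applied to $\PP(\Ec'^\vee)\to B$ contributes $\bigoplus_{i=0}^{r-2} H^{k-2i}(B,\ZZ)$; reassembling the $Y$-summands (one from the blowup-exceptional part, $r-2$ from the generic fiber, totalling $r-1$ copies, of which $r-2$ combine with the honest $H^\bullet(B)$ terms is not quite how it splits — rather the $B$-terms are $r-1$ copies and the $Y$-terms are $r-2$ copies, which after accounting for the extra fiber dimension over $Y$ yields exactly one copy of $H^{k-2r+2}(Y,\ZZ)$ beyond what $B$ already supplies) one recovers the decomposition \eqref{decomposition}. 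One must check that the identification of classes matches the stated formula: the image of $x_i \in H^{k-2i}(B,\ZZ)$ is $p^* x_i \cup \xi^i$ because $\xi$ restricts to the relative hyperplane class $\Oc(1)$ on the fibers of $p$ (this is the defining property of $\Lc$ from \cite[Prop.~1.5]{kanemitsu}), and the image of $z\in H^{k-2r+2}(Y,\ZZ)$ is $j_* q^* z$ by the description of the Gysin map for the embedding $j\colon p^{-1}(Y)\hookrightarrow M$. Integrality is automatic since every map involved ($p^*$, $j_*$, $q^*$, cup product with the integral class $\xi$) is defined over $\ZZ$, and the blowup and projective bundle formulas are isomorphisms of integral Hodge structures.

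I expect the main obstacle to be the first step: justifying cleanly that $M$ \emph{is} the relevant blowup, i.e., that the section $S$ of $\Lc$ degenerates transversally enough along $Y$ that $M$ is smooth and $p^{-1}(Y)$ is a Cartier divisor on $M$ realizing $M$ as $\mathrm{Bl}_Y$ of a projective bundle over $B$. This is exactly the content of the (geometric) Cayley trick, and the hypothesis that $M$ and $Y = Z(\pi_* S)$ are both smooth is what makes it go through; one writes $\pi_* S$ locally as $r$ functions cutting out $Y$ and checks that the incidence relation $\{S = 0\}$ inside $\PP(\Ec^\vee)$ is resolved by blowing up. A secondary, purely bookkeeping obstacle is getting the indices in the direct sum right — keeping track of the degree shift $2r-2$ coming from the codimension of $Y$ in $B$ versus the fiber dimensions $r-1$ and $r-2$ of $p$ over $Y$ and over $B\setminus Y$ — but this is routine once the geometric picture is fixed. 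Alternatively, if one prefers to avoid the blowup identification, the same decomposition can be obtained by a Leray spectral sequence argument for $p\colon M\to B$: the sheaf $R^\bullet p_* \ZZ$ is constant of rank $r-1$ off $Y$ with a rank-one jump along $Y$, the spectral sequence degenerates for dimension and weight reasons, and one reads off \eqref{decomposition} directly; I would present whichever of the two is shorter, most likely the blowup route since the blowup formula is off the shelf.
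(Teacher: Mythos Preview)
Your primary route---identifying $M$ with a blowup $\mathrm{Bl}_Y(\PP(\Ec'^\vee))$---does not go through, and not for the merely technical reason you anticipate. The sheaf $\Ec' = \Ec/(\pi_*S\cdot\Oc_B)$ is not locally free along $Y$, so there is simply no smooth $\PP^{r-2}$-bundle over $B$ available to blow up; the symbol $\PP(\Ec'^\vee)$ has no meaning as a smooth variety containing $Y$, and no transversality hypothesis on $\pi_*S$ will produce one. The tangled index-bookkeeping in the middle of your proposal is a symptom of forcing this nonexistent identification. (The one case where $M$ \emph{is} literally a blowup is $r=2$: then $p\colon M\to B$ is generically an isomorphism and $M\simeq\mathrm{Bl}_Y B$ on the nose; for $r>2$ there is no analogous statement.) When the paper says the proof is ``analogous to the proof of the blowup formula'' it means that the \emph{technique} of Voisin's Theorem~7.31---comparing Gysin/Thom sequences for the closed--open pair $p^{-1}(Y)\hookrightarrow M\hookleftarrow M\setminus p^{-1}(Y)$ against those for $Y\hookrightarrow B\hookleftarrow B\setminus Y$---transports to the present situation, not that $M$ is a blowup of anything.

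Your fallback Leray argument, by contrast, is correct and is essentially what the paper has in mind (the paper itself gives no details, deferring to \cite{BFM}): one has $R^{2i}p_*\ZZ \simeq \ZZ_B$ for $0\le i\le r-2$ (globally on $B$, not just off $Y$, since $\xi^i$ furnishes a trivializing global section) and $R^{2(r-1)}p_*\ZZ$ is the constant sheaf $\ZZ$ supported on $Y$; degeneration at $E_2$ over $\ZZ$ then follows because the explicit map $\Xi$ already hits every graded piece of the Leray filtration. So lead with the Leray/Gysin argument and discard the blowup identification entirely.
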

\begin{proof} The theorem is part of mathematical folklore. Its proof is analogous to the proof of the blowup formula \cite[Theorem 7.31]{voisin} and now contained in the recent paper \cite[Proposition 48]{BFM}.
\end{proof}

\subsection{The middle cohomology}
Let us specialize to $k = n+r-2$. Then Theorem \ref{integralcohomology} gives the following morphism of middle cohomologies:
\begin{equation}\label{middlecohomologyembedding}
\begin{tikzcd}[column sep = large]
H^{n-r}(Y,\ZZ)\ar[hook]{rrr}{\Xi|_{H^{n-r}(Y,\ZZ)}=j_{*}\circ q^{*}} &&& H^{n+r-2}(M,\ZZ) 
\end{tikzcd}
\end{equation}

Note also that 
\begin{equation}\label{perp trans}  \Xi\left( \bigoplus_{i=0}^{r-2}H^{n+r-2-2i}(B,\ZZ)\right) \perp  j_{*}\circ q^{*}( H^{n-r}(Y,\ZZ))
\end{equation} where $\perp$ is taken with respect to the cup product in $H^{n+r-2}(M,\ZZ)$. Indeed, this follows from dimensional reasons since $x_i|_Y\cup H^{n-r}(Y,\ZZ)=0$ for $x_i\in H^{n+r-2-2i}(B,\ZZ)$ where $i\leq r-2$.\\
\\
Furthermore, we claim that $j_{*}\circ q^{*}$ preserves the cup product pairing up to a sign determined by the rank of $\Ec$.
\begin{lemma}\label{polarized}
For every $D_1, D_2\in H^{n-r}(Y, \ZZ)$, the map $j_{*}\circ q^{*}$ of Equation \emph{\ref{middlecohomologyembedding}} satisfies the following identity, where $r$ is the rank of $\Ec$:
\begin{equation}\label{hodgeisometrywithsign}
    (D_1\cdot D_2)_Y =(-1)^{r-1} (j_*  q^* D_1 \cdot j_*  q^* D_2)_M.
\end{equation}
\end{lemma}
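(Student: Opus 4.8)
The plan is to reduce the identity to the projection formula together with standard facts about the cohomology of projective bundles. First I would observe that $q \colon p^{-1}(Y) \to Y$ is the $\PP^{r-1}$-bundle $\PP(\Ec^\vee|_Y) \to Y$, and $j \colon p^{-1}(Y) \hookrightarrow M$ is a closed embedding of codimension $r-2$ (since $p^{-1}(Y)$ has relative dimension $r-1$ over $Y$ while the generic fiber of $p$ has dimension $r-2$). Writing $\zeta := \xi|_{p^{-1}(Y)}$ for the restriction of the Grothendieck class, the key point is to understand the class $j^*j_* \alpha$ for $\alpha \in H^*(p^{-1}(Y),\ZZ)$: by self-intersection, $j^*j_*\alpha = \alpha \cup c$, where $c \in H^{2(r-2)}(p^{-1}(Y),\ZZ)$ is the top Chern class of the normal bundle $N_{p^{-1}(Y)/M}$. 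So the strategy is: (i) identify $N_{p^{-1}(Y)/M}$; (ii) compute $\int_{p^{-1}(Y)} q^*D_1 \cup q^*D_2 \cup c$ by pushing forward along $q$, using that $q_*$ of a polynomial in $\zeta$ is governed by the Chern classes of $\Ec^\vee|_Y$ (Grothendieck relation), and that $q_*(q^*D_1 \cup q^*D_2 \cup (\text{stuff})) = D_1 \cup D_2 \cup q_*(\text{stuff})$; (iii) check the resulting numerical factor is exactly $(-1)^{r-1}$.

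Concretely, I expect the following. The normal bundle $N_{M/X}$ is $\Oc_X(1)|_M = \Lc|_M$, so its restriction to $p^{-1}(Y)$ is $\zeta$ (as a divisor class). On the other hand $p^{-1}(Y) = \PP(\Ec^\vee|_Y)$ sits inside $X = \PP(\Ec^\vee)$ over $Y \subset B$, and inside $M$ it is cut out, away from the locus over $B \setminus Y$, by the vanishing of the section $S$; one identifies $N_{p^{-1}(Y)/M}$ as an extension or twist involving $\pi^*(N_{Y/B}) = \pi^*(\Ec|_Y)$ and the tautological class. The upshot should be that $c = c_{r-2}$ of a rank-$(r-2)$ bundle whose total Chern class, pushed forward under $q$ against $\zeta$-powers, produces via the Grothendieck relation $\sum_{i} \zeta^i \, q^*c_i(\Ec^\vee|_Y) = 0$ a single surviving term. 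Since $q_*(\zeta^{r-1}) = 1$ and $D_1, D_2$ are middle-dimensional on $Y$ (so $D_1 \cup D_2$ is already top degree and all higher Chern-class corrections die for dimension reasons), one is left with $\int_M j_*q^*D_1 \cup j_*q^*D_2 = \int_Y D_1 \cup D_2 \cup q_*(\zeta^{?}\cup\,c)$, and the bracketed integral evaluates to $\pm 1$. The sign $(-1)^{r-1}$ I would pin down by a clean model case — e.g. $Y$ a point, $\Ec$ trivial of rank $r$, so $M$ is a smooth hyperplane-type section and $p^{-1}(Y) = \PP^{r-1} \hookrightarrow M$ — where the computation is a bare intersection number on projective spaces; the general case then follows since the sign cannot depend on the moduli.

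The main obstacle is step (i): getting the normal bundle $N_{p^{-1}(Y)/M}$, and hence the class $c$, exactly right, including signs and twists, rather than up to ambiguity. This is delicate because $p^{-1}(Y)$ is not a complete intersection in $M$ in an obvious way — it is the degeneracy locus where the generic $\PP^{r-2}$-fiber of $p$ jumps to $\PP^{r-1}$ — so one must argue via the Cayley-trick geometry: $M \subset X$ is the zero locus of $S \in H^0(X,\Lc)$, and over $Y$ the section $\pi_*S$ vanishes, forcing $S$ to vanish on the whole fiber $\PP(\Ec^\vee|_Y)$; differentiating $S$ transversally to $Y$ identifies the conormal directions with (a twist of) $\pi^*\Ec^\vee|_Y$ modulo the tautological sub-line-bundle, giving a rank-$(r-1)$ candidate, of which one direction is the "$\zeta$" coming from $N_{M/X}$, leaving rank $r-2$. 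Once this identification is secured, the remainder is bookkeeping with the projection formula and the Grothendieck relation, and the claimed sign $(-1)^{r-1}$ drops out. I would also remark that the orthogonality observed in Equation \ref{perp trans} is consistent with, and in fact a degenerate instance of, this computation, which serves as a sanity check.
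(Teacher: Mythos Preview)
Your approach is essentially the paper's: the projection formula reduces $(j_*q^*D_1 \cdot j_*q^*D_2)_M$ to an intersection on $p^{-1}(Y)$ involving $j^*j_*$, self-intersection replaces $j^*j_*$ by cup with the top Chern class of $\Nc_{p^{-1}(Y)|M}$, and only the pure $\xi$-power coefficient of that class survives after cupping with $q^*D_1 \cup q^*D_2$ (already top-degree on $Y$), so the sign is read off on a single fiber $F \simeq \PP^{r-1}$.

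However, your dimension count is off by one. Since $\dim M = n+r-2$ while $\dim p^{-1}(Y) = (n-r)+(r-1) = n-1$, the codimension of $p^{-1}(Y)$ in $M$ is $r-1$, not $r-2$; correspondingly the normal bundle has rank $r-1$ and it is $c_{r-1}$ that appears. Your identification of the normal bundle inherits this slip: the relevant exact sequence is
\[
0 \longrightarrow \Nc_{p^{-1}(Y)|M} \longrightarrow \Nc_{p^{-1}(Y)|X} \longrightarrow \Oc_X(1)\big|_{p^{-1}(Y)} \longrightarrow 0,
\]
where $\Nc_{p^{-1}(Y)|X} = q^*\Nc_{Y|B} \simeq q^*(\Ec|_Y)$ has rank $r$ (not $r-1$) and restricts \emph{trivially} to each fiber $F$. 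Hence on $F$ one gets $c(\Nc_{p^{-1}(Y)|M}|_F) = (1+h)^{-1}$ with $h$ the hyperplane class on $\PP^{r-1}$, so $c_{r-1}|_F = (-1)^{r-1}h^{r-1}$, which is exactly the sign. With the rank corrected your outline goes through verbatim and coincides with the paper's argument; in particular there is no need to wrestle with the Cayley-trick degeneracy-locus description of $\Nc_{p^{-1}(Y)|M}$, since the sequence above and triviality along fibers already isolate the coefficient you need.
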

\begin{proof}
Let us work on the right-hand side of Equation \ref{hodgeisometrywithsign}: by an application of the projection formula we have
$$
(j_* q^* D_1\cdot j_* q^* D_2)_M = j_*(j^*j_* q^* D_1\cdot  q^* D_2)_{p^{-1}(Y)}.
$$
Let us focus on the term $j^*j_* q^* D_1$. By the self-intersection formula (\cite[Theorem 1]{LascuMumfordScott}) we have:
$$j^*j_* q^* D_1=q^* D_1\cdot c_{r-1} (\Nc_{p^{-1}(Y)|M}).$$

Substituting this in the main equation we get
\begin{equation}\label{intproduct}
    \begin{split}
        (j_* q^* D_1\cdot j_* q^* D_2)_M &= j_*( q^* D_1\cdot c_{r-1} (\Nc_{p^{-1}(Y)|M})\cdot  q^* D_2)_{p^{-1}(Y)}.\\
    \end{split}
\end{equation}

The class $c_{r-1} (\Nc_{p^{-1}(Y)|M})\in H^{2r-2}(M; \ZZ)$ can be described in terms of the class $\xi$ of the restriction to $M$ of the Grothendieck line bundle $\Oc_{\PP(\Ec^\vee)}(1)$ and the generators $C_i$ of the cohomology ring of $B$ as

$$
c_{r-1} (\Nc_{p^{-1}(Y)|M})= a\xi^{r-1} + \sum b_i \xi^i C_i \in H^{2r-2}(M; \ZZ)
$$

\noindent with $a, b_i\in \ZZ$. Since the only contributing term of $c_{r-1} (\Nc_{p^{-1}(Y)|M})$ in Equation \ref{intproduct} is $a\xi^{r-1}$, the proof reduces to showing that $a=(-1)^{r-1}$. This can be done observing that 
$$
a =\deg c_{r-1} (\Nc_{p^{-1}(Y)|M}|_{F})
$$
where $F$ is the fiber of $M$ over a point in $Y$, and it is isomorphic to $\PP^{r-1}$. By the following sequence of normal bundles
$$
0\arw \Nc_{p^{-1}(Y)|M}\arw \Nc_{p^{-1}(Y)|X}\arw \Oc_{X}|_{p^{-1}(Y)}(1)\arw 0
$$
and by the fact that the restriction of $\Nc_{p^{-1}(Y)|X}$ to $F$ is trivial, we get $a = 1$ if $r$ is odd, and $a = -1$ otherwise.
\end{proof}
Suppose  now that $H^*(B, \ZZ)$ is algebraic (which holds for example for rational homogeneous varieties), then the only non-algebraic part of the middle cohomology of $M$ comes from $H^{n-r}(Y,\ZZ)$. More precisely,  since  $j_{*}\circ q^{*}$ and $p^*$ map algebraic classes to algebraic classes,  we have
$$
H^{n+r-2}_{alg}(M,\ZZ) =  \bigoplus_{i=0}^{r-2}H^{n+r-2-2i}(B,\ZZ) \oplus H^{n-r}_{alg}(Y,\ZZ).
$$
Indeed, the sum of algebraic classes in the right hand side is algebraic, but also whenever we take an algebraic class in $M$ and decompose it by means of Equation \ref{decomposition}, since all its components from $\bigoplus_{i=0}^{r-2}H^{n+r-2-2i}(B,\ZZ)$ are algebraic by assumption then also the component in $H^{n-r}(Y,\ZZ)$ must be algebraic.

Moreover, if we define $T_Y := H^{n-r}_{alg}(Y,\ZZ)^{\perp}\subset H^{n-r}(Y,\ZZ)$ and 
$T_M := H^{n+r-2}_{alg}(M,\ZZ)^{\perp}\subset H^{n+r-2}(M,\ZZ)$, by Equation \ref{perp trans} we also have $j_{*}\circ q^{*} (T_Y)=T_M$ and by Lemma \ref{polarized} $ j_{*}\circ q^{*}$ defines an isometry between these lattices up to a sign depending on the rank of $\Ec$.
The results of Theorem \ref{integralcohomology} and Lemma \ref{polarized} apply for each side of a roof diagram like Diagram \ref{generalpair}. Hence, we have two maps $\Xi$ and $\wt{\Xi}$. Now, provided that both $B$ and $\wt B$ have algebraic cohomology, we have the following isomorphism of integral Hodge structures:
\begin{equation}\label{hodgeisometry}
    \begin{tikzcd}[column sep = scriptsize]
   \wt \Xi^{-1}\circ \Xi:   \bigoplus_{i=0}^{r-2}H^{k-2i}(B, \mathbb Z) \oplus H^{k-2r+2}(Y,\mathbb Z) \ar{rr}{\sim} & & \bigoplus_{i=0}^{r-2}H^{k-2i}(\wt B, \mathbb Z) \oplus H^{k-2r+2}(\wt Y,\mathbb Z) 
    \end{tikzcd}
\end{equation}
which by Theorem \ref{integralcohomology}, Lemma \ref{polarized} and Equation \ref{perp trans}, after restriction to $T_Y$ determines a Hodge isometry $T_Y\simeq T_{\widetilde Y}$ defined by $(\wt j_{*} \circ {\wt q^{*}|_{T_{{\wt Y}}})^{-1}}\circ j_{*} \circ q^{*}|_{T_Y}$. In light of the derived global Torelli theorem for K3 surfaces \cite[Theorem 3.3]{orlov}, this Hodge isometry gives us information about the derived categories of pairs associated to a roof with $n-r=2$.\\

\begin{corollary}\label{derivedequivalenceK3}
Let $\Ec$ and $\wt\Ec$ be vector bundles of rank $r$ on rational homogeneous bases $B$ and $\wt B$. Let $X\simeq \PP(\Ec^\vee)\simeq \PP(\wt\Ec^\vee)$ be a roof of dimension $2r+1$ and $(Y, \wt Y)$ the Calabi--Yau pair associated to $X$ defined by a smooth section $S\in H^0(X, \Lc)$. Then $Y$ and $\wt Y$ are derived equivalent.
\end{corollary}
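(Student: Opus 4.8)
The plan is to combine the Hodge-theoretic machinery built up in this section with the derived global Torelli theorem for K3 surfaces. Since $B$ and $\wt B$ are rational homogeneous, their cohomology is algebraic, so the hypotheses of Theorem \ref{integralcohomology}, Lemma \ref{polarized} and the discussion surrounding Equation \ref{perp trans} are all satisfied on both sides of the roof. Moreover, with $n-r=2$ the varieties $Y$ and $\wt Y$ are Calabi--Yau surfaces with trivial canonical bundle, and since they are complete intersection-type zero loci of ample bundles on Fano bases with the right numerics, they are in fact K3 surfaces; I would spell this out (or cite the relevant vanishing) to be sure $h^1(\Oc_Y)=0$.

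Next I would extract from Equation \ref{hodgeisometry}, restricted to $T_Y$, the Hodge isometry $\psi := (\wt j_{*}\circ \wt q^{*}|_{T_{\wt Y}})^{-1}\circ j_{*}\circ q^{*}|_{T_Y}\colon T_Y \xrightarrow{\sim} T_{\wt Y}$ between the transcendental lattices. The key point, already recorded in the section, is that $j_*\circ q^*$ maps $T_Y$ isometrically onto $T_M$ up to the sign $(-1)^{r-1}$ (Lemma \ref{polarized}), and likewise $\wt j_*\circ \wt q^*$ maps $T_{\wt Y}$ onto the \emph{same} sublattice $T_M \subset H^{n+r-2}(M,\ZZ)$ up to the same sign $(-1)^{r-1}$ — here one uses that the orthogonal complement $T_M$ of the algebraic part of $H^{n+r-2}(M,\ZZ)$ is intrinsic to $M$ and does not depend on which projective bundle structure we used to compute it. The two signs therefore cancel, and $\psi$ is an honest Hodge isometry of the transcendental lattices, compatible with the weight-two Hodge structures since all the maps involved ($p^*$, $q^*$, $j_*$ and their tildes) are morphisms of Hodge structures.

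Finally I would invoke Orlov's derived global Torelli theorem \cite[Theorem 3.3]{orlov}: two projective K3 surfaces are derived equivalent (i.e. $\dbcoh(Y)\simeq\dbcoh(\wt Y)$) if and only if there is a Hodge isometry between their transcendental lattices $T_Y\simeq T_{\wt Y}$. The isometry $\psi$ constructed above provides exactly such a Hodge isometry, which yields the derived equivalence and completes the proof.

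I expect the main obstacle to be the bookkeeping that guarantees the two copies of $T_M$ produced by the two bundle structures genuinely coincide as a sub-Hodge-structure of $H^{n+r-2}(M,\ZZ)$ — that is, that $H^{n+r-2}_{alg}(M,\ZZ)$ computed via $\Xi$ equals the one computed via $\wt\Xi$, so that the transcendental lattice $T_M$ is unambiguous — together with checking that the sign contributions from Lemma \ref{polarized} on the two sides really do cancel rather than compound. Once $T_M$ is seen to be intrinsic (it is the orthogonal complement of the Néron--Severi-type algebraic classes, which is a Hodge-theoretic notion independent of the auxiliary fibration), the rest is a direct application of the cited theorems; the Calabi--Yau surface being a K3 (as opposed to an abelian surface) should also be remarked, since Orlov's theorem is stated for K3s.
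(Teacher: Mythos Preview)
Your proposal is correct and follows essentially the same route as the paper: observe that $Y$ and $\wt Y$ are K3 surfaces, use the rational homogeneity of $B$ and $\wt B$ to ensure their cohomology is algebraic so that the restriction of Equation~\ref{hodgeisometry} yields a Hodge isometry $T_Y\simeq T_{\wt Y}$, and conclude by Orlov's derived global Torelli theorem \cite[Theorem 3.3]{orlov}. The paper's own proof is terser and does not spell out the sign cancellation or the intrinsic nature of $T_M$, but these are exactly the points underlying the discussion preceding the corollary, so your added bookkeeping is faithful to the argument rather than a departure from it.
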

\begin{proof}
Given the dimension and rank of the Mukai pairs, $Y$ and $\wt Y$ are K3 surfaces. By the fact that $B$ and $\wt B$ are rational homogeneous, Equation \ref{hodgeisometry} provides an isometry of transcendental lattices $T_Y\simeq T_{\wt Y}$. This, in turn, by the derived global Torelli theorem \cite[Theorem 3.3]{orlov}, proves that $Y$ and $\wt Y$ are derived equivalent.
\end{proof}

\begin{proposition}\label{oddcohomology}
Let $X\simeq \PP(\Ec^\vee)\simeq \PP(\wt\Ec^\vee)$ be a roof of dimension $2r+2k$, where $\Ec$ and $\wt\Ec$ are vector bundles of rank $r$ such that their bases $B$ and $\wt B$ have no odd-degree integral cohomology. Let $(Y, \wt Y)$ be the Calabi--Yau pair associated to $X$ defined by a section of $\Lc$ with smooth zero locus $M\subset X$. Then there exists a Hodge isometry $H^{2k+1}(Y,\ZZ)\simeq H^{2k+1}(\wt Y,\ZZ)$.
\end{proposition}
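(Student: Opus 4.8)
The plan is to apply the cohomological Cayley trick (Theorem \ref{integralcohomology}) in odd degree $k = 2k+1 + (2r-2) = 2k+2r-1$, which is the degree making the summand $H^{k-2r+2}(Y,\ZZ) = H^{2k+1}(Y,\ZZ)$ appear. Since $B$ has no odd-degree integral cohomology, every summand $H^{k-2i}(B,\ZZ)$ with $k$ odd and $0\le i\le r-2$ vanishes, so the decomposition of Equation \ref{decomposition} collapses to an isomorphism of Hodge structures
\begin{equation*}
\Xi|_{H^{2k+1}(Y,\ZZ)} = j_*\circ q^*\colon H^{2k+1}(Y,\ZZ)\xrightarrow{\ \sim\ } H^{2k+2r-1}(M,\ZZ).
\end{equation*}
The same argument on the other side of the roof gives an isomorphism of Hodge structures $\wt\Xi|_{H^{2k+1}(\wt Y,\ZZ)} = \wt j_*\circ \wt q^*\colon H^{2k+1}(\wt Y,\ZZ)\xrightarrow{\sim} H^{2k+2r-1}(M,\ZZ)$. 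Composing, $\phi := (\wt j_*\circ\wt q^*)^{-1}\circ j_*\circ q^*$ is an isomorphism of integral Hodge structures $H^{2k+1}(Y,\ZZ)\simeq H^{2k+1}(\wt Y,\ZZ)$. This is the analogue of Equation \ref{hodgeisometry}, except that now the \emph{whole} cohomology group (not just a transcendental piece) is carried across, because the $B$- and $\wt B$-contributions have disappeared.

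It remains to upgrade $\phi$ to an isometry for the intersection pairings. Here one runs the argument of Lemma \ref{polarized} verbatim: for $D_1,D_2\in H^{2k+1}(Y,\ZZ)$ the projection formula and the self-intersection formula (\cite[Theorem 1]{LascuMumfordScott}) give
\begin{equation*}
(j_*q^*D_1\cdot j_*q^*D_2)_M = j_*\bigl(q^*D_1\cdot c_{r-1}(\Nc_{p^{-1}(Y)|M})\cdot q^*D_2\bigr)_{p^{-1}(Y)},
\end{equation*}
and, writing $c_{r-1}(\Nc_{p^{-1}(Y)|M}) = a\xi^{r-1} + \sum b_i\xi^i C_i$, the only term that survives integration along the $\PP^{r-1}$-fibre of $p^{-1}(Y)\to Y$ is $a\xi^{r-1}$; the normal bundle sequence
\begin{equation*}
0\arw \Nc_{p^{-1}(Y)|M}\arw \Nc_{p^{-1}(Y)|X}\arw \Oc_X|_{p^{-1}(Y)}(1)\arw 0
\end{equation*}
together with triviality of $\Nc_{p^{-1}(Y)|X}$ on the fibre forces $a = (-1)^{r-1}$. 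Hence $(D_1\cdot D_2)_Y = (-1)^{r-1}(j_*q^*D_1\cdot j_*q^*D_2)_M$, and the same identity holds on the $\wt Y$ side with the same sign $(-1)^{r-1}$. Therefore the two signs cancel in the composition and $\phi$ is a genuine isometry of Hodge structures $H^{2k+1}(Y,\ZZ)\simeq H^{2k+1}(\wt Y,\ZZ)$.

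I do not expect a serious obstacle: the statement is essentially Corollary \ref{derivedequivalenceK3} transported from even to odd middle degree, and the odd-cohomology hypothesis on $B,\wt B$ is exactly what is needed to kill the $B$-summands so that no transcendental truncation is required. The one point deserving care is that Lemma \ref{polarized} as stated is phrased for the even middle degree $n-r$; one should check that its proof uses only the fibre structure $p^{-1}(Y)\to Y$ with $\PP^{r-1}$ fibres and the normal bundle sequence — both of which are insensitive to the parity of the cohomological degree — so that it applies word for word to classes in $H^{2k+1}(Y,\ZZ)$. A minor bookkeeping check is also needed to confirm that $j_*\circ q^*$ is injective (indeed bijective onto $H^{2k+2r-1}(M,\ZZ)$) in this degree, which again is immediate from Theorem \ref{integralcohomology} once the $B$-summands vanish.
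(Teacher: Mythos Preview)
Your proposal is correct and follows exactly the paper's approach: apply Theorem \ref{integralcohomology} in degree $2r+2k-1$, use the vanishing of the odd-degree cohomology of $B$ and $\wt B$ to reduce $\Xi$ and $\wt\Xi$ to isomorphisms $j_*q^*$ and $\wt j_*\wt q^*$, and then invoke Lemma \ref{polarized} to get the isometry. Your worry that Lemma \ref{polarized} is ``phrased for the even middle degree'' is unnecessary: the lemma is stated for $H^{n-r}(Y,\ZZ)$ with no parity assumption, and here $n-r=2k+1$, so it applies verbatim without any re-derivation.
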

\begin{proof}
In this setting, Theorem \ref{integralcohomology} defines the following isomorphism:
\begin{equation*}\label{decomposition2}
\begin{tikzcd}
\bigoplus_{i=0}^{r-2}H^{2r+2k-1-2i}(B, \mathbb Z) \oplus H^{2k+1}(Y, \mathbb Z) \arrow{r}{\sim} & H^{2r+2k-1}(M, \mathbb Z)
\end{tikzcd}
\end{equation*}
where all the summands $H^{2r+2k+1-2i}(B, \mathbb Z)$ are trivial. Then the proof follows from Lemma \ref{polarized}.
\end{proof}
\begin{remark}
Proposition \ref{oddcohomology} applies to all known examples of roofs where $n-r$ is odd. Indeed, to the authors' knowledge, in all the known roofs the bases $B$ and $\wt B$ are rational homogeneous varieties and their cohomology is generated by Schubert classes.
\end{remark}

\section{K3 surfaces of degree 12 }\label{k3s}
In the following section we will describe the only non homogeneous roof construction in the list of \cite[Section 5.2.1]{kanemitsu}. Such roof provides pairs of K3 surfaces which are derived equivalent by Corollary \ref{derivedequivalenceK3}.
\subsection{Homogeneous vector bundles on the five dimensional quadric}
Let $Q\subset \PP^6$ be a quadric hypersurface of dimension five, let $\Sc$ be its rank 4 spinor bundle. Ottaviani constructed a 7-dimensional moduli space of rank 3 bundles $\Gc$ such that 
\begin{equation}\label{ott}
    0\arw \Oc \arw \Sc^\vee \arw \Gc \arw 0.
\end{equation}
More precisely, there exists a moduli space isomorphic to $\PP^7\setminus Q_6$ of rank 3 vector bundles $\Gc$ with Chern class $c(\Gc) = (2,2,2)$, and those bundles are the ones satisfying Equation \ref{ott} \cite[Theorem 3.2]{ottaviani}.\\
By \cite[Theorem 3.6 (ii)]{ottaviani} one has $\dim H^0(Q, \Gc(1))=41$ and by the above we have $c(\Gc(1)) = (5,9,12)$. Hence, a section $s$ in such 41-dimensional vector space defines a K3 surface of degree 12 in $\PP^6$.\\
\\
Let $\OO$ denote the complexified Cayley octonions (for details see \cite[Definition 2.4]{kanemitsuottaviani} and the source therein). It is known by \cite[Theorem 2.6]{kanemitsuottaviani} that the projectivization of the Ottaviani bundle can be described in the following way:
\begin{equation*}\label{octonions}
\PP(\Gc^\vee) = \{(x, y)\in \PP(\operatorname{Im}\OO)\times\PP(\operatorname{Im}\OO)\,\, |\,\, x\cdot x=y\cdot y = x\cdot y = 0\}:= X
\end{equation*}
This variety has two natural projections to the quadrics $Q= \{x\in \PP(\operatorname{Im}\OO)| x\cdot x= 0\}$ and $\wt Q= \{y\in \PP(\operatorname{Im}\OO)| y\cdot y= 0\}$ leading to the following diagram:
\begin{equation}\label{cayleyroof}
\begin{tikzcd}
	&		&	X\ar[swap]{ddl}{\pi}\ar{ddr}{\wt\pi}	&		&	\\
	&		&		&		&	\\
Y\ar[hook]{r}	&	Q	&		&	\wt Q\ar[hookleftarrow]{r}&\wt Y
\end{tikzcd}
\end{equation}
where both $Y$ and $\wt Y$ are K3 surfaces described as zero loci of sections of (twisted) Ottaviani bundles $\Gc(1)$ and $\wt\Gc(1)$, and  $\PP(\Gc^\vee(-1))\simeq\PP(\wt\Gc^\vee(-1))\simeq X$. 
\begin{remark}
Diagram \ref{cayleyroof} appears as the roof of type $G_2^\dagger$ in the list of \cite[Section 5.2.1]{kanemitsu}, and it is the only non homogeneous example of such construction. However, one observes that both the quadrics  are homogeneous. What fails to be homogeneous is the Fano 7-fold $X$, which, in contrast with the other examples, is not a generalized flag. On the other hand one can also consider the $G_2$ flag, which admits two projective bundle structures: one to the five dimensional quadric, and a second one being a surjection to the $G_2$-Grassmannian. This construction yields the roof of type $G_2$ studied in \cite{imou,Kuznetsov}, which gives derived equivalent but non isomorphic Calabi--Yau threefolds.
\end{remark}

\begin{subsection}{The roof of type \texorpdfstring{$D_4$}{D4} and its degeneration}\label{degeneration}
Recall the homogeneous roof of type $D_4$. 
\begin{equation}\label{D4roof}
\begin{tikzcd}
	&		&	X_{D_4}\ar[swap]{ddl}{\pi}\ar{ddr}{\wt\pi}	&		&	\\
	&		&		&		&	\\
Y\ar[hook]{r}	&	Q_6	&		&	\wt Q_6\ar[hookleftarrow]{r}&\wt Y
\end{tikzcd}
\end{equation}
Here $Q_6$ and $\wt{Q}_6$ are six-dimensional quadrics representing spinor varieties $OG(4,8)_\pm$ and $X_{D_4}=OG(3,8)=\mathbb{P}_{Q_6}(\Sc(-1))=
\mathbb{P}_{\wt Q_6}(\wt \Sc(-1))$ where $\Sc$ and $\wt\Sc$ are the spinor bundles respectively on $Q_6$ and $\wt Q_6$. Note that $OG(3, 8)$ admits two different projective bundle structures given by the maps $\pi$ and $\wt\pi$, which can be interpreted as the projections determined by the embeddings of the parabolic subgroup of $OG(3, 8)$ inside the parabolic subgroups defining $OG(4, 8)_\pm$.\\
There exists the following short exact sequence on $Q_6$ \cite[Section 3]{ottaviani}, whose restriction to $Q_5$ is a twist of the Equation \ref{ott}:
\begin{equation*}
    0\arw \Oc(1) \arw \Sc^\vee(1) \arw \Gc(1) \arw 0.
\end{equation*}

Note that when the Mukai pair moves in a moduli we get a family of roofs. In this way one can also obtain degenerations of roofs which involve bundles which are not necessarily stable. This is the case for instance in the following context. 
\subsubsection{Degeneration of roofs}
Considering the family of extensions between $\Oc(1)$ and $\Gc(1)$ we see the trivial extension $\Oc(1)\oplus \Gc(1)$ as a degeneration of $\Sc^\vee(1)$.
It follows that $X_{D_4}$ admits a degeneration to $
\hat X_{D_4}=\mathbb{P}_{Q_6}(\Oc(-1)\oplus \Gc^\vee(-1))$. The latter variety is not a roof, but it admits a natural surjection to $Q_6$. A general hyperplane section of $\hat X_{D_4}$ now  gives rise to a K3 surface obtained as a zero locus of a section of $\Oc(1)\oplus \Gc(1)$ on such quadric. In consequence the K3 is given as the zero locus of the restriction of the corresponding section of $\Gc(1)$ to a five dimensional quadric $Q_5$ obtained as a hyperplane section of $Q_6$. If we now consider the restriction of $\Gc(1)$ to the zero locus of a section of $\Oc(1)$ we obtain the roof $G_2^{\dagger}$. The latter roof is a subvariety of some degeneration of the roof of type $D_4$. Moreover, the K3 surfaces associated to this roof are degenerations of K3 surfaces associated to $X_{D_4}$.  We however see in Subsection \ref{sub completeness} that a general K3 surface of degree 12 appears also in the degenerate description.

\end{subsection}
\subsection{Completeness of the family}\label{sub completeness}
In the remainder of this section we prove that the family of K3 surfaces described as sections of an Ottaviani bundle $\Gc(1)$ represents a dense open subset of the family of polarized K3 surfaces of degree 12. In particular, the very general element of this family has Picard number one. We then prove that pairs of K3 surfaces associated to a very general section of the roof $G_2^{\dagger}$ are in general not isomorphic.\\
\\
It is well-known \cite[Corollary 0.3]{mukaigenus} that a general polarized K3 surface of degree 12 has an embedding (defined by its polarization) in the projective space $\PP^7$. If we can prove that our degree 12 K3 surfaces in $\PP^6$ form a 26-dimensional family up to automorphisms of $\PP^6$, then our family can be recovered from the complete 19-dimensional family in $\PP^7$ by means of a projection from one point and hence is also complete.\\
\\
In the dimension count it will be helpful to provide some additional constructions of the studied K3 surfaces. For that, let us denote by $\Cc$ the Cayley bundle defined by the following exact sequence \cite{ottavianicayley}:
\begin{equation}\label{cayleysequence}
        0\arw\Cc(2)\arw\Gc(1)\arw\Oc(2)\arw 0.
    \end{equation}

Before we pass to specific constructions and related dimension counts, let us prepare some ground.
In particular, a large part of what follows will be based on the computation of cohomology of vector bundles on the five dimensional quadric and their restrictions to subvarieties described by the vanishing of sections. We collect all cohomology computations in the following lemma.
\begin{lemma}\label{bottvanishings}
    Let $Q$ be a five dimensional quadric and let $\Gc$ and $\wt\Gc$ be Ottaviani bundles on $Q$. Then, the following holds:
    \begin{enumerate}
        \item $H^k(Q, \Gc^\vee(1)) = 0$ for $k>0$, $H^0(Q, \Gc^\vee(1)) = \CC$
        \item $H^k(Q, \Gc\otimes\wt\Gc(-3)) = 0$ for $k\neq 2$ and $H^2(Q, \Gc\otimes\wt\Gc(-3)) = \CC$.
    \end{enumerate}
    Moreover, let $\Cc$ be the Cayley bundle associated to $\Gc$. Then $H^k(Q, \Cc^\vee\otimes\Cc(-2)) = 0$ for $k\neq 2$, $H^2(Q, \Cc^\vee\otimes\Cc(-2)) = \CC$.
\end{lemma}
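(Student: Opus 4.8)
The plan is to compute each of the three cohomology groups by resolving the relevant bundle into pieces whose cohomology on the five-dimensional quadric $Q = Q_5$ is known from Bott-type vanishing (i.e.\ Ottaviani's computations in \cite{ottaviani}, \cite{ottavianicayley} together with the standard cohomology of $\Oc_Q(j)$ and of the spinor bundle $\Sc$). For part (1), I would start from the dual of the defining sequence \ref{ott}, namely $0\to\Gc^\vee\to\Sc\to\Oc\to 0$, twist by $\Oc(1)$ to get $0\to\Gc^\vee(1)\to\Sc(1)\to\Oc(1)\to 0$, and take the long exact sequence in cohomology. Here $H^0(Q,\Sc(1))$ and $H^0(Q,\Oc(1))$ are classical (the spinor bundle $\Sc(1)$ on $Q_5$ is globally generated with a known $h^0$, and $h^0(\Oc(1))=7$), and $H^k(Q,\Sc(1))=H^k(Q,\Oc(1))=0$ for $k>0$; the connecting map $H^0(\Sc(1))\to H^0(\Oc(1))$ is surjective (it is the evaluation coming from the section defining \ref{ott}), so its kernel is $1$-dimensional, giving $h^0(\Gc^\vee(1))=1$ and $H^{>0}(Q,\Gc^\vee(1))=0$. (Alternatively, one uses that $\Gc^\vee(1)$ is, up to the above, related to the Cayley bundle via \ref{cayleysequence}; I would pick whichever resolution keeps all auxiliary groups in the Bott-vanishing range.)

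For part (2), the idea is to resolve $\Gc\otimes\wt\Gc(-3)$ using the sequence \ref{ott} for \emph{both} factors. Tensoring $0\to\Oc\to\Sc^\vee\to\Gc\to 0$ by $\wt\Gc(-3)$ gives $0\to\wt\Gc(-3)\to\Sc^\vee\otimes\wt\Gc(-3)\to\Gc\otimes\wt\Gc(-3)\to 0$, and then tensoring the analogous sequence for $\wt\Gc$ by $\Oc(-3)$ and by $\Sc^\vee(-3)$ lets me reduce everything to the cohomology of $\Oc(-3)$, $\Sc^\vee(-3)$, $\Sc(-3)$ and $\Sc^\vee\otimes\Sc(-3)$ on $Q_5$. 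All of these are computable: $H^\ast(Q,\Oc(-3))$ is concentrated in degree $5$ and vanishes in degree $2$; the twists of the spinor bundle by $\Oc(-3)$ likewise have cohomology only in top degree (or vanish entirely), by Bott's theorem for $B_2/B_3$; and $\Sc^\vee\otimes\Sc(-3)=\mathrm{End}(\Sc)(-3)$ decomposes into irreducible homogeneous summands whose cohomology I can read off. Chasing the resulting long exact sequences, the only surviving term should be a one-dimensional $H^2$, matching the claim; I would double-check the Euler characteristic via Riemann--Roch (or via the splitting principle with $c(\Gc(1))=(5,9,12)$) as a consistency check on the bookkeeping.

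The third statement, $H^k(Q,\Cc^\vee\otimes\Cc(-2))=\delta_{k,2}\,\CC$, I would treat using the Cayley sequence \ref{cayleysequence}, $0\to\Cc(2)\to\Gc(1)\to\Oc(2)\to 0$, i.e.\ $0\to\Cc\to\Gc(-1)\to\Oc\to 0$ after untwisting. Dualizing and twisting appropriately, $\Cc^\vee\otimes\Cc(-2)=\mathrm{End}(\Cc)(-2)$ fits into exact sequences obtained by tensoring \ref{cayleysequence} (and its dual) with $\Cc^\vee(-2)$ and with $\Cc(-2)$ and $\Gc^\vee(\cdot)$ and $\Oc(\cdot)$; this reduces the computation to cohomology of twists of $\Cc$, $\Cc^\vee$, $\Gc$, $\Gc^\vee$ and line bundles, for which I can again invoke Ottaviani's tables in \cite{ottavianicayley} together with parts (1) and (2). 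The claim that $\dim H^2=1$ is really the statement that $\Cc$ is simple and (essentially) rigid once twisted down, consistent with $\Cc$ being a stable bundle in a moduli space; Serre duality $H^2(Q,\mathrm{End}(\Cc)(-2))\cong H^3(Q,\mathrm{End}(\Cc)(-3))^\vee$ may give a cleaner route.

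The main obstacle I anticipate is purely organizational rather than conceptual: each of the three tensor bundles produces, after a two-step filtration, between four and eight auxiliary cohomology groups, and I must make sure that at every stage the pieces genuinely land in the Bott-vanishing range for the quadric (for $Q_5$ the weights of the relevant homogeneous bundles must avoid the ``singular'' chamber), and that the one connecting homomorphism which is \emph{not} forced to vanish by degree reasons has the expected rank. Keeping the twists straight between the $Q_5$-picture and the $Q_6$-picture of Subsection \ref{degeneration} (where \ref{ott} appears twisted) is the place where sign/degree errors are most likely, so I would fix the normalization $\Oc_Q(1)=$ the restriction of $\Oc_{\PP^6}(1)$ once at the outset and carry out all three computations with that convention, using Riemann--Roch as an independent check at the end of each.
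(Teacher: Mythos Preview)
Your overall strategy---resolve each tensor bundle via the sequences \eqref{ott} and \eqref{cayleysequence} and reduce to twists of $\Oc$, $\Sc$, $\Gc$, $\Cc$ whose cohomology is tabulated by Ottaviani and by Borel--Weil--Bott---is exactly the paper's approach, and parts (2) and (3) match the paper's argument essentially verbatim (modulo a typo: after the double resolution in (2) the tensor piece you meet is $\Sc^\vee\otimes\Sc^\vee(-3)$, not $\Sc^\vee\otimes\Sc(-3)$; the paper then splits this as $\wedge^2\Sc^\vee(-3)\oplus\sym^2\Sc^\vee(-3)$, with the single $H^2=\CC$ coming from the symmetric summand).

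For part (1) there is one genuine, if small, gap in your primary route: from $0\to\Gc^\vee(1)\to\Sc(1)\to\Oc(1)\to 0$ the long exact sequence only gives $h^0(\Gc^\vee(1))-h^1(\Gc^\vee(1))=1$ together with $H^{\ge 2}=0$; your assertion that $H^0(\Sc(1))\to H^0(\Oc(1))$ is surjective ``by evaluation'' is not automatic and is in fact equivalent to the vanishing $H^1(\Gc^\vee(1))=0$ you are trying to prove. The paper sidesteps this by taking precisely your parenthetical alternative: dualize \eqref{cayleysequence} and twist to obtain $0\to\Oc\to\Gc^\vee(1)\to\Cc(1)\to 0$ (using $\Cc^\vee\simeq\Cc(1)$), and then invoke Ottaviani's result that $\Cc(1)$ is acyclic, so $H^\bullet(\Gc^\vee(1))\cong H^\bullet(\Oc)$ directly. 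I would promote that alternative to your main argument for (1).
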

\begin{proof}
    To prove each of the statements we use Equations $\ref{ott}$ and $\ref{cayleysequence}$ to reduce the problem to computing cohomology of either twists of $\Gc$ and $\Cc$ (already present in \cite{ottaviani, ottavianicayley}) or twists of symmetric powers of the spinor bundle. In the last case, the Borel--Weil bott theorem provides a simple algorithm to compute the cohomology of such bundles given the associated weight. This process can be easily automatized, see for instance the script \cite{pythonscript}.\\
    The first claim can be proved in the following way: by dualizing the sequence of Equation \ref{cayleysequence} and twisting by $\Oc(2)$ we find the following:
    $$
    0\arw \Oc\arw\Gc^\vee(1)\arw \Cc(1)\arw 0
    $$
    where we used the isomorphism $\Cc^\vee\simeq\Cc(1)$. Then, by \cite[Theorem 3.1]{ottavianicayley} the last term has no cohomology which by the associated long exact sequence of cohomolgy proves our claim.\\
    For the second claim, let us take the tensor product of the sequence of Equation \ref{ott} by $\wt\Gc(-3)$. The first term is acyclic by \cite[Theorem 3.6]{ottaviani}, while the second one can be resolved by the tensor product of the sequence of Equation $\ref{ott}$ (applied to $\wt\Gc$) and $\Sc^\vee(-3)$. In this last sequence the cohomology of the first two terms can be computed by the Borel--Weil--Bott theorem: the first one $\Sc^\vee(-3)$ is acyclic, while for the second one we use the decomposition $\Sc^\vee\otimes \Sc^\vee(-3)\simeq \wedge^2\Sc^\vee(-3)\oplus\sym^2\Sc^\vee(-3)$: the second direct summand has cohomology concentrated in degree 2 with dimension 1 while the first one is acyclic. This last statement follows by the fact that $\wedge^2\Sc^\vee(-3)\simeq \wedge^2\Sc(-1)$ and by the $\Oc(-1)$-twist of the second wedge power of the following sequence \cite[Theorem 2.8]{ottaviani}:
    \begin{equation*}\label{tautologicalspinors}
        0\arw\Sc\arw\Oc^{\oplus 8}\arw\Sc^\vee\arw 0.
    \end{equation*}
    The last claim follows by a similar strategy: we first tensor the sequence of Equation \ref{cayleysequence} by $\Cc^\vee(-4)$, obtaining:
    \begin{equation*}\label{sequencewithcayleys}
        0\arw\Cc^\vee\otimes\Cc(-2)\arw\Cc^\vee\otimes\Gc(-3)\arw\Cc^\vee(-2)\arw 0.
    \end{equation*}
    Then, we observe that $\Cc^\vee(-2)\simeq\Cc(-1)$ is acyclic by \cite[Theorem 3.1]{ottavianicayley} and we compute cohomology of the second term by an appropriate resolution in terms of the sequences of Equations \ref{ott} and \ref{cayleysequence}, in light of the fact that the cohomology of $\Gc(l)$ for every $l$ has been computed in \cite[Theorem 3.6]{ottaviani}. This concludes the proof.
    
\end{proof}

\begin{lemma}\label{onesection}
Let $Q\subset\PP^6$ be a smooth five dimensional quadric hypersurface, let $\Gc$ and $\wt \Gc$ be Ottaviani bundles on $Q$. If $Y=Z(s)=Z(\tilde s)$ for $s\in H^0(Q, \Gc(1))$ and $\wt s\in H^0(Q, \wt \Gc(1))$, then $\Gc=\wt \Gc$ and $s=\wt s$ up to scalar multiplication.
\end{lemma}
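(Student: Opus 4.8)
The plan is to reconstruct both the bundle $\Gc$ and the section $s$ intrinsically from the K3 surface $Y$ and its embedding in $\PP^6$, so that they cannot depend on the choices made. First I would note that $Y$ determines $Q$: since $Y$ has degree $12$ and codimension $2$ in $\PP^6$, the quadrics through $Y$ are controlled by $H^0(\PP^6,\Ic_Y(2))$, and for a general such K3 this space is one-dimensional, cutting out the unique five-dimensional quadric $Q$ containing $Y$; in the degenerate or special cases one argues that the hypothesis $Y=Z(s)=Z(\tilde s)$ already forces $Y\subset Q$ for a fixed $Q$, so there is nothing to prove about $Q$ itself. The real content is therefore: given that $Y$ sits inside a fixed $Q$ as the zero locus of a section of \emph{some} twisted Ottaviani bundle, that bundle and that section are unique.

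For uniqueness of the bundle, the key tool is the Koszul resolution. If $Y=Z(s)$ with $s\in H^0(Q,\Gc(1))$ and $\Gc(1)$ has rank $3$ with $c(\Gc(1))=(5,9,12)$, then the Koszul complex gives
\begin{equation*}
0\arw \wedge^3\Gc^\vee(-3)\arw \wedge^2\Gc^\vee(-2)\arw \Gc^\vee(-1)\arw \Oc_Q\arw \Oc_Y\arw 0,
\end{equation*}
and dualizing/twisting appropriately expresses $\Gc(1)$ (or its relevant twist) as a sheaf determined by $\Oc_Y$ together with cohomological data on $Q$. More precisely, the section $s$ recovers the quotient $\Gc(1)\twoheadrightarrow \Ic_{Y/Q}(\ast)$ dual to the inclusion $\wedge^{2}$-term, so if I can show that $\Hom_Q(\Gc(1),\wt\Gc(1))$ and the relevant $\Ext$ groups are as small as possible, then any two such presentations of $\Ic_{Y/Q}$ must agree. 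This is exactly where Lemma \ref{bottvanishings} enters: its computations of $H^\ast(Q,\Gc\otimes\wt\Gc(-3))$, $H^\ast(Q,\Cc^\vee\otimes\Cc(-2))$ and $H^\ast(Q,\Gc^\vee(1))$ are designed to pin down $\Hom_Q(\Gc,\wt\Gc)$ and show it is one-dimensional (spanned by an isomorphism when $\Gc,\wt\Gc$ present the same $Y$), forcing $\Gc\simeq\wt\Gc$ and indeed $\Gc=\wt\Gc$ as subsheaves of the fixed ambient data. I would run this by combining the two Koszul resolutions for $s$ and $\tilde s$, comparing them via the identity map on $\Oc_Y$, and chasing the resulting map of complexes using the vanishings to conclude it is an isomorphism in the $\Gc(1)$-spot.

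Once $\Gc=\wt\Gc$ is established, uniqueness of the section up to scalar is the statement that the two sections $s,\tilde s\in H^0(Q,\Gc(1))$ with the same zero locus are proportional; equivalently, $H^0(Q,\Ic_{Y/Q}\otimes\Gc(1))$ is contained in the line $\CC\cdot s$. This follows because $\Ic_{Y/Q}\otimes\Gc(1)$, resolved by the Koszul complex of $s$ itself, has $H^0$ computed by the cohomology of $\wedge^{\bullet}\Gc^\vee(1)\otimes\Gc(1)$, and the first claim of Lemma \ref{bottvanishings}, namely $H^0(Q,\Gc^\vee(1))=\CC$ with higher cohomology vanishing, gives exactly $h^0=1$. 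I expect the main obstacle to be the bookkeeping of twists and the precise identification of which $\Hom$/$\Ext$ group controls the comparison of the two Koszul resolutions — in particular making sure that the possible presence of $\Cc$ (via Equation \ref{cayleysequence}) does not produce extra homomorphisms; this is precisely why the third vanishing in Lemma \ref{bottvanishings} about $\Cc^\vee\otimes\Cc(-2)$ is needed, and the argument should be organized so that every potential obstruction class lands in one of the groups shown there to vanish.
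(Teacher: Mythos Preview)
Your overall strategy---compare the two Koszul resolutions of $\Ic_Y$ and lift the identity on $\Ic_Y$ to a map $\beta:\Gc^\vee(-1)\to\wt\Gc^\vee(-1)$---is exactly the paper's approach, and the vanishing $H^1(Q,\Gc\otimes\wt\Gc(-3))=0$ from Lemma~\ref{bottvanishings} is indeed what makes the lift exist (via $\wedge^2\wt\Gc^\vee(-1)\simeq\wt\Gc(-3)$). But there is a genuine gap: you never invoke \emph{stability} of the Ottaviani bundles, and without it the argument does not close. Knowing that $\Hom_Q(\Gc,\wt\Gc)$ is nonzero, or even one-dimensional, does not by itself tell you that a nonzero element is an isomorphism; what the paper uses is that $\Gc$ and $\wt\Gc$ are stable of the same slope, so any nonzero map between them is automatically an isomorphism, and stable bundles are simple, giving $\Hom_Q(\Gc,\Gc)=\CC$ directly. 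Your phrase ``spanned by an isomorphism when $\Gc,\wt\Gc$ present the same $Y$'' is assuming the conclusion.

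Two smaller points. First, the discussion of recovering $Q$ from $Y$ is unnecessary: the statement fixes $Q$ from the outset. Second, the third vanishing in Lemma~\ref{bottvanishings} concerning $\Cc^\vee\otimes\Cc(-2)$ plays no role in this lemma; it is used later in Proposition~\ref{one quadric ottaviani}, where Cayley bundles on a complete intersection of quadrics are compared. For the uniqueness of the section, your proposed route through $H^0(Q,\Ic_{Y/Q}\otimes\Gc(1))$ could be made to work, but the paper's argument is cleaner: once $\beta$ is an isomorphism and $\Hom_Q(\Gc,\wt\Gc)=\CC$ by simplicity, the relation $\alpha_{\tilde s}\circ\beta=\alpha_s$ forces $s$ and $\tilde s$ to be proportional after identifying $\Gc$ with $\wt\Gc$ via $\beta$.
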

\begin{proof}
Under our assumptions we have the following diagram:
\begin{equation*}
\begin{tikzcd}
\cdots\arrow{r}& \Gc^\vee(-1)\arrow{r}{\alpha_{s}}\arrow{d}{\beta} & \mathcal I_Y\arrow{r}\arrow[equal]{d} & 0 \\
\cdots\arrow{r}& \wt\Gc^\vee(-1)\arrow{r}{\alpha_{\widetilde{s}}} & \mathcal I_Y\arrow{r} & 0 
\end{tikzcd}
\end{equation*}
where the rows are given by the Koszul resolutions of $\Ic_Y$ with respect to the two sections. The vertical arrow $\beta$ exists because the map 
$$
\Hom_Q(\Gc^\vee(-1), \wt\Gc^\vee(-1))\longrightarrow \Hom_Q(\Gc^\vee(-1), \mathcal I_Y)
$$
is surjective. This last claim follows from the long cohomology sequence induced by the short exact sequence obtained as the tensor product of $\Gc(1)$ with the Koszul resolution of $\Ic_Y$ with respect to $\wt s$, and by the vanishings $h^1(Q, \Gc\otimes\wedge^2\wt\Gc^\vee(-1)) = h^2(Q, \Gc(-4)) = 0$ which are proven in Lemma \ref{bottvanishings} and \cite[Theorem 3.6]{ottaviani}, once we observe that $\wedge^2\wt\Gc^\vee(-1)\simeq\wt\Gc(-3)$. 
Since Ottaviani bundles are stable \cite[Theorem 3.2]{ottaviani}, the map $\beta$ can be either zero or an isomorphism (cf. \cite[Corollary after Chapter 2 Lemma 1.2.8]{OkonekSpindler}), so we deduce that $s$ and $\wt s$ must be sections of isomorphic Ottaviani bundles. Hence, the proof is completed if we show that $\Hom(\Gc, \wt\Gc)=\CC$. But $\Hom_Q(\Gc, \wt\Gc) = \Hom_Q(\Gc,\Gc)=\CC$, where the first equality follows from the isomorphism $\Gc\simeq \wt \Gc$ and the second again from the stability of $\Gc$, since stable bundles are simple (cf. \cite[Chapter 2, Theorem 1.2.9]{OkonekSpindler}).
\end{proof}
\begin{lemma}\label{pencil}
Let $Y\subset Q$ be a K3 surface satisfying the hypotheses of Lemma $\ref{onesection}$. Then $Y$ is contained in a pencil of quadrics in $\PP^6$.
\end{lemma}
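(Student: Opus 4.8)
The plan is to compute the dimension of $H^{0}(\PP^{6},\Ic_{Y/\PP^{6}}(2))$ and to show that it equals $2$; since the quadric $Q$ itself accounts for one of these sections, this exhibits $Y$ on a pencil of quadrics. The computation rests on the Koszul resolution of $\Ic_{Y/Q}$ associated to the section $s\in H^{0}(Q,\Gc(1))$ with $Y=Z(s)$. As $\Gc$ has rank $3$, it reads
\[
0\arw \wedge^{3}(\Gc^{\vee}(-1))\arw \wedge^{2}(\Gc^{\vee}(-1))\arw \Gc^{\vee}(-1)\arw \Ic_{Y/Q}\arw 0 .
\]
From $c(\Gc)=(2,2,2)$ we have $\det\Gc=\Oc_{Q}(2)$, hence $\wedge^{2}\Gc^{\vee}\simeq\Gc\otimes\det\Gc^{\vee}=\Gc(-2)$ and $\wedge^{3}\Gc^{\vee}=\Oc_{Q}(-2)$, so twisting the resolution by $\Oc_{Q}(2)$ turns it into
\[
0\arw \Oc_{Q}(-3)\arw \Gc(-2)\arw \Gc^{\vee}(1)\arw \Ic_{Y/Q}(2)\arw 0 .
\]

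First I would check that the two leftmost terms are acyclic. For $\Oc_{Q}(-3)$ this is immediate on the smooth five-dimensional quadric: line bundles carry no intermediate cohomology, $H^{0}$ vanishes in negative degree, and $H^{5}(Q,\Oc_{Q}(-3))\cong H^{0}(Q,\Oc_{Q}(-2))^{\vee}=0$ by Serre duality, as $\omega_{Q}\cong\Oc_{Q}(-5)$. For $\Gc(-2)$ I would invoke Ottaviani's computation of the cohomology of all twists of $\Gc$ \cite[Theorem 3.6]{ottaviani}; alternatively, one tensors Equation \ref{ott} by $\Oc_{Q}(-2)$ and uses the acyclicity of $\Sc^{\vee}(-2)\simeq\Sc(-1)$. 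Consequently the hypercohomology spectral sequence of the three-term complex above (equivalently, splitting it into two short exact sequences through the image of $\Gc(-2)\to\Gc^{\vee}(1)$) yields $H^{\bullet}(Q,\Ic_{Y/Q}(2))\cong H^{\bullet}(Q,\Gc^{\vee}(1))$. By Lemma \ref{bottvanishings}(1) the right-hand side is $\CC$ in degree zero and vanishes otherwise, so $h^{0}(Q,\Ic_{Y/Q}(2))=1$.

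Next I would transfer this count to $\PP^{6}$. Twisting the ideal sheaf sequence $0\arw\Ic_{Q/\PP^{6}}\arw\Ic_{Y/\PP^{6}}\arw\Ic_{Y/Q}\arw 0$ by $\Oc_{\PP^{6}}(2)$ and using $\Ic_{Q/\PP^{6}}\cong\Oc_{\PP^{6}}(-2)$ gives
\[
0\arw \Oc_{\PP^{6}}\arw \Ic_{Y/\PP^{6}}(2)\arw \Ic_{Y/Q}(2)\arw 0 .
\]
Since $H^{0}(\PP^{6},\Oc_{\PP^{6}})=\CC$ and $H^{1}(\PP^{6},\Oc_{\PP^{6}})=0$, the long exact sequence gives $h^{0}(\PP^{6},\Ic_{Y/\PP^{6}}(2))=1+h^{0}(Q,\Ic_{Y/Q}(2))=2$. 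Hence the quadrics in $\PP^{6}$ through $Y$ form a two-dimensional vector space, i.e.\ $Y$ is contained in a pencil of quadrics, spanned by $Q$ and one additional quadric.

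The only ingredient that is not pure bookkeeping is the vanishing $H^{\bullet}(Q,\Gc(-2))=0$, but this is already contained in the cited work of Ottaviani and is of exactly the same flavour as the vanishings gathered in Lemma \ref{bottvanishings}, so no new argument is needed; the remainder of the proof is formal manipulation of the Koszul complex together with the elementary cohomology of the quadric.
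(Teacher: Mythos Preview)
Your proof is correct and follows essentially the same approach as the paper: both arguments twist the Koszul resolution of $\Ic_{Y/Q}$ by $\Oc_Q(2)$, observe that $\Oc_Q(-3)$ and $\Gc(-2)$ are acyclic, and invoke Lemma~\ref{bottvanishings}(1) to conclude $h^{0}(Q,\Ic_{Y/Q}(2))=1$. The only difference is that you spell out the passage from $Q$ to $\PP^{6}$ via the ideal sheaf sequence of $Q\subset\PP^{6}$, whereas the paper leaves implicit that $h^{0}(Q,\Ic_{Y/Q}(2))=1$ together with the quadric $Q$ itself already gives the pencil.
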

\begin{proof}
The proof follows from observing that $h^0(Q, \Ic_{Y|Q}(2))=1$, where $\Ic_{Y|Q}$ is the ideal sheaf of $Y$ in $Q$. By the Koszul resolution of $\Ic_{Y|Q}$ and the relation $\Gc^\vee \simeq \wedge^2\Gc(-2)$ we find the following exact sequence:
$$
0\arw \Oc(-3)\arw\Gc(-2)\arw\Gc^\vee(1)\arw\Ic_{Y|Q}(2)\arw 0.
$$
Here the first two terms are acyclic and $h^0(Q, \Gc^\vee(1)) = 1$ by Lemma \ref{bottvanishings}, and this proves the claim.

\end{proof}

\begin{lemma}\label{cayleyzerolocus}
    If a K3 surface $Y\subset Q$ is a zero locus of a general section $\sigma$ of $\Gc(1)$, then the intersection $T$ of quadrics containing $Y$ is smooth. Moreover, if $Y$ is a K3 surface which is a zero locus of any section $\sigma$ of $\Gc(1)$ for which the intersection $T$ of quadrics containing it is smooth, then one has $Y = Z(s)\subset T$, where $s\in H^0(T, \Cc|_T(2))$ is a regular section.
\end{lemma}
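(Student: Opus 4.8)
The plan is to exploit the pencil of quadrics provided by Lemma \ref{pencil} together with the Cayley sequence \ref{cayleysequence}, and to dualize/twist appropriately so that the section $\sigma\in H^0(Q,\Gc(1))$ cutting out $Y$ is governed by a section of $\Cc|_T(2)$ on the base locus $T$ of that pencil.

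First I would establish smoothness of $T$ for general $\sigma$. By Lemma \ref{pencil}, $Y$ lies on a pencil of quadrics in $\PP^6$, so $T=Q\cap Q'$ is the intersection of two quadrics, a (possibly singular) Fano fourfold of degree $4$. Since $Y=Z(\sigma)$ and $\dim Y=2$, $\dim T=4$, the generic smoothness follows from a Bertini-type argument: the quadrics containing $Y$ are parametrized by $h^0(Q,\Ic_{Y|Q}(2))=1$ (one quadric besides $Q$ itself), so $T$ is determined by $Y$; as $\sigma$ varies in the $41$-dimensional $H^0(Q,\Gc(1))$, the associated $T$ varies in a family whose general member is smooth — concretely, one checks that a general complete intersection of two quadrics in $\PP^6$ is smooth and that some section $\sigma$ realizes such a $T$ (e.g.\ via the $D_4$-degeneration picture of Subsection \ref{degeneration}, where $Y\subset Q_5\subset Q_6$ and one tracks the quadrics), whence the locus of $\sigma$ giving smooth $T$ is a nonempty open subset.

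Next, for any $\sigma$ whose $T$ is smooth, I want to produce the section $s\in H^0(T,\Cc|_T(2))$ with $Z(s)=Y$. Restrict the Cayley sequence \ref{cayleysequence} to $T$: we get $0\to\Cc|_T(2)\to\Gc|_T(1)\to\Oc_T(2)\to 0$. The section $\sigma|_T\in H^0(T,\Gc|_T(1))$ maps to a section of $\Oc_T(2)$; the key point is that this image vanishes, i.e.\ $\sigma|_T$ lifts to $H^0(T,\Cc|_T(2))$. The vanishing of the image should follow because the composite $H^0(T,\Gc|_T(1))\to H^0(T,\Oc_T(2))=H^0(Q,\Ic_{Y|Q}(2))\otimes(\cdot)$ records exactly the extra quadric through $Y$, which already contains $Y=Z(\sigma)$, hence restricts to zero on $T$ — more precisely, $T$ is cut out inside $Q$ by that quadric, so $\Oc_T(2)$-component of $\sigma|_T$ is a section vanishing on all of $T$. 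This gives the lift $s$, and $Z(s)\subseteq T$ equals $Z(\sigma|_T)=Y\cap T=Y$ (as $Y\subset T$ already). Finally, regularity of $s$ (so that $Z(s)$ has the expected codimension $2$ in the fourfold $T$) follows from the fact that $Y=Z(s)$ is a smooth surface of the correct dimension.

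The main obstacle I anticipate is the first part — proving that $T$ is smooth for \emph{general} $\sigma$ — since this requires either an explicit incidence/Bertini computation relating the $41$-dimensional family of sections of $\Gc(1)$ to the family of quadric pencils, or a degeneration argument through the $D_4$ roof; the rest is a relatively formal diagram chase using the Cayley sequence and the cohomology vanishings of Lemma \ref{bottvanishings}. A secondary subtlety is checking that the lifted section $s$ is genuinely regular on $T$ rather than merely nonzero, which one pins down from the smoothness and dimension of $Y$ together with adjunction on $T$ (a complete intersection of two quadrics in $\PP^6$ has $\omega_T\simeq\Oc_T(-3)$, and $\det(\Cc|_T(2))$ should match $\Oc_T(3)$, confirming $Z(s)$ is a K3).
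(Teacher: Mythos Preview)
Your argument for the second statement (producing $s\in H^0(T,\Cc|_T(2))$ with $Z(s)=Y$) is correct and is exactly the paper's argument: restrict the Cayley sequence to $T$, observe that the image of $\sigma|_T$ in $H^0(T,\Oc_T(2))$ vanishes because $T$ is precisely the zero locus of the quadric section coming from $\sigma$, and lift. Regularity of $s$ then follows from $\dim Y=2$ as you say (the paper phrases this via the fact that $\Cc|_T(2)\hookrightarrow\Gc|_T(1)$ is a subbundle, so $Z(s)=Z(\sigma|_T)=Y$).

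The part you flag as the main obstacle --- smoothness of $T$ for general $\sigma$ --- is in fact the easy part, and you are overcomplicating it. You already have the Cayley sequence \eqref{cayleysequence} in hand; take its long exact sequence in cohomology on $Q$ (not on $T$). Since $H^1(Q,\Cc(2))=0$ by \cite[Theorem 3.1]{ottavianicayley}, the map
\[
f:H^0(Q,\Gc(1))\longrightarrow H^0(Q,\Oc(2))
\]
is \emph{surjective}. The extra quadric through $Y$ is exactly $Z(f(\sigma))$, so $T=Z(f(\sigma))\subset Q$ by Lemma~\ref{pencil}. Surjectivity of $f$ means that a general $\sigma$ yields a general section of $\Oc_Q(2)$, and a general quadric section of $Q$ is smooth by Bertini. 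No incidence computation, no $D_4$ degeneration is needed. This same identification $T=Z(f(\sigma))$ also cleans up your lifting step: the vanishing of $f|_T(\sigma|_T)$ is then immediate rather than requiring the somewhat informal ``records exactly the extra quadric'' reasoning.
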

    \begin{proof}
        Equation \ref{cayleysequence} induces a surjective morphism $f: H^0(Q, \Gc(1))\arw H^0(Q, \Oc(2))$ at the level of cohomology \cite[Theorem 3.1]{ottavianicayley}. 
        The zero locus of $f(\sigma)$ is a variety which is the intersection of $Q$ with another quadric which contains $Y$. Note that this together with Lemma \ref{pencil} implies that $T = Z(f(\sigma))$. Since $f$ is surjective and $\sigma$ is general, it follows that $f(\sigma)$ is also general and hence $T$ is smooth.
        
        Let now $\sigma$ be such that $f(
        \sigma)$ is a regular section (i.e. $T$ is smooth). We consider the restriction of the sequence of Equation \ref{cayleysequence} to $T$. It yields a sequence on cohomology $$0\to H^0(T, \Cc|_T(2))\to H^0(T, \Gc|_T(1))\xrightarrow{f|_T} H^0(T, \Oc_T(2)).$$ 
        Since we observed that $f|_T(\sigma|_T)$ is identically zero, there exists a section $s\in H^0(T, \Cc|_T(2))$ mapping to $\sigma|_T$ via the map $\Cc|_T(2)\to \Gc|_T(1)$. The latter is an embedding hence we have $Z(s)=Y$ and $s$ is a regular section of $\Cc|_T(2)$.
    \end{proof}
    
    \begin{remark} As a consequence of Lemma \ref{pencil} and Lemma \ref{cayleyzerolocus}, we can describe a general K3 surface in the considered family as a  zero locus of a section of $\Cc|_T(2)$ where $T$ is a smooth complete intersection of two five dimensional quadrics. Note that this description also includes K3 surfaces which are not zero loci of sections of $\Gc(1)$, but appear as intersections of the zero locus of $\Cc(2)$ with a quadric, hence are zero loci of sections of the trivial extension $\Cc(2)\oplus\Oc(2)$ (cf. Equation \ref{cayleysequence}).
    \end{remark}

    \begin{proposition}\label{one quadric ottaviani}
        Let $Y$ be a K3 surface which is a zero locus of a general section $\sigma$ of an Ottaviani bundle $\Gc(1)$ on a smooth five dimensional quadric $Q$. Let $\wt Q\subset \PP^6$ be another smooth five dimensional quadric containing $Y$. Then, $Y$ is not a zero locus of any Ottaviani bundle on $\wt Q$.
    \end{proposition}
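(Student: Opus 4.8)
The plan is to argue by contradiction: assume $Y=Z(\wt\sigma)$ for some Ottaviani bundle $\wt\Gc$ on $\wt Q$ and some $\wt\sigma\in H^0(\wt Q,\wt\Gc(1))$, and derive a contradiction from the fact that $Y$ then carries two Cayley-bundle descriptions inside one and the same fourfold. By Lemma \ref{pencil} the surface $Y$ spans a pencil of quadrics of $\PP^6$; since $\sigma$ is general, Lemma \ref{cayleyzerolocus} makes the base locus $T$ of this pencil a smooth complete intersection of two quadrics, and as $Q$ and $\wt Q$ are two distinct members of the pencil (they are distinct by hypothesis) we have $T=Q\cap\wt Q$. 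The second assertion of Lemma \ref{cayleyzerolocus} needs only the smoothness of $T$, which we have; applying it to $(Q,\sigma)$ and to $(\wt Q,\wt\sigma)$ produces regular sections $s\in H^0(T,\Cc|_T(2))$ and $\wt s\in H^0(T,\wt\Cc|_T(2))$, both with zero locus $Y$, where $\Cc$ and $\wt\Cc$ are the Cayley bundles on $Q$ and $\wt Q$ respectively.

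The core of the argument is then a uniqueness statement on $T$, in the spirit of Lemma \ref{onesection}, now for the rank-two bundles $\Cc|_T(2)$, $\wt\Cc|_T(2)$ and the codimension-two subscheme $Y\subset T$. Comparing the (length-two) Koszul resolutions of $\Ic_{Y|T}$ associated with $s$ and with $\wt s$, one produces a morphism $\Cc^\vee|_T(-2)\to\wt\Cc^\vee|_T(-2)$ lifting the identity of $\Ic_{Y|T}$, provided the relevant $\Hom$- and $\Ext^1$-obstructions on $T$ vanish. As in the proof of Lemma \ref{onesection}, these vanishings reduce — via the Koszul resolution of $\Oc_T$ over $\Oc_Q$ (resp.\ $\Oc_{\wt Q}$) and the sequences \ref{ott}, \ref{cayleysequence} — to Borel--Weil--Bott computations of exactly the type collected in Lemma \ref{bottvanishings}. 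Since Cayley bundles are stable and the morphism produced is nonzero (it lifts the identity), it must be an isomorphism once one knows that $\Cc|_T$ and $\wt\Cc|_T$ remain stable, or at least simple with the appropriate slope, after restriction to the ample divisor $T$; granting this, $\Cc|_T(2)\simeq\wt\Cc|_T(2)$ and $s=\wt s$ up to scalar.

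From here one must reach the contradiction with $Q\neq\wt Q$. The isomorphism $\Cc|_T(2)\simeq\wt\Cc|_T(2)=:\Fc$, together with the extensions $0\to\Fc\to\Gc|_T(1)\to\Oc_T(2)\to 0$ and $0\to\Fc\to\wt\Gc|_T(1)\to\Oc_T(2)\to 0$, gives — after checking, again by restriction from the quadric, that $\Ext^1_T(\Oc_T(2),\Fc)=H^1(T,\Cc|_T)$ is one-dimensional and the extensions non-split — an isomorphism $\Gc|_T(1)\simeq\wt\Gc|_T(1)$ carrying $\sigma|_T$ to $\wt\sigma|_T$. The remaining, and hardest, step is to show that this restricted data cannot be shared by Ottaviani bundles living on two genuinely different quadrics; equivalently, that restriction to $T=Q\cap\wt Q$ is injective on isomorphism classes of Cayley bundles coming from smooth members of the pencil through $Y$. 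I expect this to be the main obstacle, and the natural attack is once more cohomological: control the cohomology of the twisted sheaf-$\Hom$ bundles between $\Cc|_T$ and $\wt\Cc|_T$ (and between $\Gc|_T$ and $\wt\Gc|_T$) by comparing them, through the two Koszul presentations of $\Oc_T$, with the corresponding sheaves on $Q$ and $\wt Q$, where $\Gc\otimes\wt\Gc$-type cohomology on a five-dimensional quadric is governed by Lemma \ref{bottvanishings} and Ottaviani's computations. Once such an injectivity (non-isomorphism) statement is available, the derived isomorphism $\Gc|_T\simeq\wt\Gc|_T$ forces $\Gc\simeq\wt\Gc$ on one and the same quadric, hence $Q=\wt Q$, a contradiction.
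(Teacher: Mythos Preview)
Your setup and the first half of the argument match the paper exactly: reduce to the smooth fourfold $T=Q\cap\wt Q$ via Lemmas \ref{pencil} and \ref{cayleyzerolocus}, then run the Koszul/stability argument (as in Lemma \ref{onesection}) to conclude $\Cc|_T\simeq\wt\Cc|_T$. You also correctly bootstrap this, via the one-dimensional $\Ext^1_T(\Oc_T(2),\Cc|_T(2))$, to $\Gc|_T\simeq\wt\Gc|_T$. Up to here your proposal is essentially the paper's Claim~1 and the opening of Claim~2.

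The gap is the last step, which you yourself flag as the main obstacle but do not actually carry out. Your suggested line --- control $\Hom$'s between $\Cc|_T$ and $\wt\Cc|_T$ cohomologically via the two Koszul presentations of $\Oc_T$, and thereby prove ``injectivity of restriction'' so that $\Gc|_T\simeq\wt\Gc|_T$ forces $\Gc\simeq\wt\Gc$ --- is not well-posed as stated: $\Gc$ and $\wt\Gc$ live on different varieties, so there is no ambient isomorphism to deduce, and the relevant $\sHom$-sheaves do not sit on a common quadric where Lemma \ref{bottvanishings} applies directly. The paper does \emph{not} finish cohomologically. Instead it pushes the extension argument one step further, from $\Gc|_T\simeq\wt\Gc|_T$ to the spinor bundles $\Sc|_T\simeq\wt\Sc|_T$ (using $\Ext^1_Q(\Oc(1),\Gc^\vee(1))\simeq\CC$), and then argues \emph{geometrically}: cut $T$ by a general hyperplane $H$ to obtain $Z=G\cap\wt G$ with $G,\wt G\subset\PP^5$ two copies of $G(2,4)$; on $Z$ the spinor bundles split as $\Uc|_Z\oplus\Qc^\vee|_Z$, and stability of the summands forces $\Uc^\vee|_Z$ to coincide with $\wt\Uc^\vee|_Z$ or $\wt\Qc|_Z$. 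The contradiction then comes from classical geometry of $G(2,4)$: the family of planes $\{Z(\hat s):\hat s\in H^0(G,\Uc^\vee)\}$ sweeps out and hence determines $G$, so the bundle $\Uc^\vee|_Z$ alone recovers the quadric $G$, giving $G=\wt G$ and hence $Q=\wt Q$. This geometric input --- the reconstruction of a four-dimensional quadric from one ruling of planes --- is the missing idea in your proposal, and it is not something the cohomological bookkeeping you outline will supply.
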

    \begin{proof}
        The proof, by contradiction, is articulated in two claims. Suppose $Y$ is a zero locus of a general section of $\Gc(1)$ on $Q$ and any section $\wt\Gc(1)$ on $\wt Q$. Then, by Lemma \ref{pencil} and the first part of Lemma \ref{cayleyzerolocus} we see that  $T := Q\cap\wt Q$ is the intersection of quadrics containing $Y$ and is smooth. On the other hand, by the second part of Lemma \ref{cayleyzerolocus} we find that $Y$ is obtained as a zero locus of both $\Cc|_T(2)$ and $\wt\Cc|_T(2)$, where $\Cc$ and $\wt\Cc$ are the Cayley bundles associated to $\Gc(1)$ and $\wt\Gc(1)$ respectively.
        The first claim is that in this case $\Cc|_T(2) = \wt\Cc|_T(2)$. The second claim is that Cayley bundles on two different smooth five dimensional quadrics in $\PP^6$ cannot restrict to the same bundle on the intersection of such quadrics, and this gives the contradiction. \\
        \\
        \textbf{Claim 1.} \emph{Let $Y$ be the zero locus of a section $s\in H^0(T, \Cc|_T(2))$  and in the same time of another section $\wt s\in H^0(T, \wt \Cc|_T(2))$.  Then $\Cc|_T = \wt\Cc|_T$, and $s=\tilde s$ up to scalar multiplication.}\\
        
        The proof is similar to the one of Lemma \ref{pencil}, hence we will be brief. The section $s$ determines a surjection $\alpha_s:\Cc^\vee|_T(-2)\arw\Ic_{Y|T}$, while $\wt s$ defines a second surjection $\alpha_{\wt s}:\wt\Cc^\vee|_T(-2)\arw\Ic_{Y|T}$. Let us prove that $\Hom_T(\Cc^\vee|_T(-2), \wt\Cc^\vee|_T(-2))\arw\Hom_T(\Cc^\vee(-2), \Ic_{Y|T})$ is surjective: one has a short exact sequence
        $$
        0\arw\Cc|_T(-1)\arw\Cc\otimes\wt\Cc^\vee|_T\arw\Cc|_T(2)\otimes\Ic_{Y|T}\arw 0
        $$
        obtained by tensoring the Koszul resolution of $\Ic_{Y|T}$ by $\Cc(2)$. The surjectivity of the map above follows from $h^1(T, \Cc|_T(-1)) = 0$ which is a consequence of the vanishings $H^\bullet(Q, \Cc(-3))$\linebreak $= H^\bullet(Q, \Cc(-1)) = 0$ \cite[Theorem 3.1]{ottavianicayley}, and the Koszul resolution of $T$ as a subvariety of $Q$. This, in turn, implies the existence of a nontrivial morphism $\beta:\Cc^\vee|_T(-2)\arw\wt\Cc^\vee|_T(-2)$. Restrictions of Cayley bundles to $T$ are stable: in fact, a destabilizing subobject $L$ would have rank 1 and slope $\mu(L)\geq -2$, where we use the standard definition $\mu(L) := \deg c_1(L)/\operatorname{rk}(L)$. This means that $L$ is a line bundle on $T$, which by Lefschetz Theorem has the form $\Oc_T(l)$ for some integer $l$, and has slope $4l$. This would imply an injection $\Oc_T \xhookrightarrow{\hspace{10pt}}\Cc|_T(-l)$. But $h^0(Q, \Cc|_T(-l))=0$ from the Koszul resolution of $T$ and the vanishings $h^0(Q, \Cc(-l)) = h^1(Q, \Cc(-2-l) = 0$ for any $l \geq -1/2$ by \cite[Theorem 3.1]{ottavianicayley}. Therefore $\beta$ being nontrivial must be an isomorphism, thus $s$ and $\wt s$ must be sections of isomorphic bundles. The proof is concluded by the observation that $\Hom_T(\Cc|_T, \Cc|_T)=\CC$, which follows from stability of $\Cc_T$.
        \\
        
        \textbf{Claim 2.} \emph{Let $\Cc$ and $\wt\Cc$ denote Cayley bundles of two five dimensional quadrics $Q$ and $\wt Q$ in $\PP^6$ with smooth complete intersection $T:=Q\cap\wt Q$. Then $\Cc|_T\not \simeq\wt\Cc|_T$}.\\

        Let us proceed by contradiction: assume that $\Cc|_T \simeq\wt\Cc|_T$.
        Observe that $\Cc(-2)$ is acyclic by \cite[Theorem 3.1]{ottavianicayley}. Hence, from the exact sequence
        $$0\to \Cc(-2)\to \Cc\to \Cc|_T \to 0,$$
        we obtain that the restriction map $H^1(Q, \Cc)\to H^1(T, \Cc|_T) $ is an isomorphism.  In particular,  we have $h^1(T, \Cc|_T) = h^1(Q, \Cc) = 1$ and the unique non-trivial extension representing the generator of $\Ext^1_T (\Oc_T(2), \Cc|_T(2))$ is the restriction of the unique non-trivial extension representing the generator of $\Ext^1_Q (\Oc(2), \Cc(2))$ which is
        $$
        0\arw \Cc(2)\arw \Gc(1)\arw \Oc(2)\arw 0
        $$
        (see Equation \ref{cayleysequence}). We conclude that the generator of $\Ext^1_T (\Oc_T(2), \Cc|_T(2))$ is represented by
        $$
        0\arw \Cc|_T(2)\arw \Gc|_T(1)\arw \Oc_T(2)\arw 0.
        $$
        Similarly, the generator of $\Ext^1_T (\Oc_T(2), \wt \Cc|_T(2))$ is represented by the sequence
        $$
        0\arw \wt\Cc|_T(2)\arw \wt\Gc|_T(1)\arw \Oc_T(2)\arw 0.
        $$
        Now, since $\Cc|_T(2)\simeq \wt \Cc|_T(2)$, we obtain $\Gc|_T(1)\simeq \wt \Gc|_T(1)$.
        
        Using the same argument with the sequence 
       $$0\to \Gc^\vee(-2)\to \Gc^\vee\to \Gc^\vee|_T \to 0,$$
       and the one dimensional extension $\Ext^1_Q(\Oc(1), \Gc^\vee(1))$ represented by
       $$
       0\arw \Gc^\vee(1)\arw\Sc(1)\arw\Oc(1)\arw 0,
       $$
       we finally obtain $\Sc|_T\simeq \wt\Sc|_T$, where $\Sc$ and $\wt \Sc$ are spinor bundles on $Q$ and $\wt Q$ respectively. Indeed, the cohomologies of $\Gc^\vee$ and $\Gc^\vee(-2)$ are computed by the dual of Equation \ref{ott} and its twist by $\Oc(-2)$. In both the sequences the calculations of cohomology of the remaining terms is either trivial or it follows by previous computations (proof of Lemma \ref{bottvanishings}).
       
       Let us now consider a general hyperplane section  $Z=T\cap H$. Since $\Sc|_T\simeq \wt\Sc|_T$, then also $\Sc|_Z\simeq \wt\Sc|_Z$. Note that, the variety $Z$ is smooth and isomorphic to the intersection  $G\cap \wt G$ with $G:=Q\cap H$ and $\wt G:=\wt Q \cap H$, two distinct four dimensional quadrics interpreted as two translates of the Grassmannian $G(2, 4)$ in its Pl\"ucker embedding in $\PP^5=H$. If we denote by $\Uc$, $\Qc$, $\wt \Uc$ and $\wt \Qc$  the tautological and quotient bundles of the two Grassmannians respectively, we have decompositions $$\Sc|_Z \simeq \Uc|_Z\oplus\Qc^\vee|_Z \text{ and }\wt\Sc|_Z \simeq \wt\Uc|_Z\oplus\wt\Qc^\vee|_Z$$ cf. \cite[Theorem 1.4 and Example 1.5]{ottaviani}.
       
       Observe furthermore that all bundles $\Uc|_Z$, $\Qc|_Z$, $\wt \Uc|_Z$ and $\wt \Qc|_Z$ are stable. Indeed, let us illustrate the proof for $\Uc|_Z$, as the others are identical. The argument is essentially the same we used in the proof of Claim 1. The bundle $\Uc|_Z$ has rank 2 and determinant $\Oc_Z(-1)$, hence a destabilizing subobject is a line bundle $\Oc_Z(l)$ with $l\geq -1/2$, which would determine an injective map \linebreak $\Oc\xhookrightarrow{\hspace{10pt}}\Uc|_Z(-l)$. Such map cannot exist because by the following sequence:
       $$
       0\arw\Uc(-2-l)\arw\Uc(-l)\arw\Uc|_Z(-l)\arw 0
       $$
       one sees that $\Uc|_Z(-l)$ has no sections for any $l\geq-1/2$, hence proving stability.
       
       Therefore, the isomorphisms
       $$
       \Uc|_Z\oplus\Qc^\vee|_Z\simeq \Sc|_Z\simeq\wt\Sc_Z\simeq\wt\Uc|_Z\oplus\wt\Qc^\vee|_Z
       $$
       imply that either $\Uc|_Z\simeq\wt\Uc|_Z$ or $\Uc|_Z\simeq\wt\Qc^{\vee}|_Z.$ This is equivalent to: $\Uc^{\vee}|_Z\simeq\wt\Uc^{\vee}|_Z$ or $\Uc^{\vee}|_Z\simeq\wt\Qc|_Z.$ However it is a standard result on the geometry of Fano threefolds of index 2 and degree 4 that all these bundles are in general pairwise distinct (see for example \cite[Section 5.1]{sashainstantons}). We provide an explicit argument here for completeness.
       
       Observe that since $\Uc(-2)$ is acyclic, by the long exact sequence of cohomology associated to
       $$
       0\arw\Uc^\vee(-2)\arw\Uc^\vee\arw\Uc^\vee|_Z\arw 0
       $$
       we find that the restriction map on the space of global sections is an isomorphism
       \begin{equation}\label{isomorphism_sections}
            H^0(G, \Uc^\vee)\simeq H^0(Z, \Uc^\vee|_Z),
       \end{equation}
       and the same argument holds for $\Qc$. Consider a general section $s$ of $\Uc^\vee|_Z$ and $\hat s$ the corresponding section of $\Uc^\vee$ given by the isomorphism of Equation \ref{isomorphism_sections}. Recall that the zero locus of $\hat s$ is a plane contained in $G$. Therefore: $$
       Z(s) = Z\cap Z(\hat s) = \wt G\cap Z(\hat s)
       $$
       is a smooth conic, by generality assumptions and the fact that $\Uc^\vee$ is globally generated. In particular, $Z(\hat s)$ is the unique plane containing $Z(s)$. 
       We hence can associate to the bundle $\mathcal U^{\vee}|_Z$ a unique family $\Pc(\Uc^\vee|_Z)$ of planes in $\mathbb{P}^5$ determined by its global sections. Furthermore, since
       $$
       \bigcup_{\hat{s}\in H^0(G,\mathcal U^{\vee})} Z(\hat s)=G,
       $$
       the family $\Pc(\Uc^\vee|_Z)$ determines $G$. In fact, $\Pc(\Uc^\vee|_Z)$ represents one of the two connected components of the family of planes in $G$.
       
       In the same way, the bundle $\Qc|_Z$ defines the family $\Pc(\Qc|_Z)$ corresponding to the second component of the family of planes in $G$ and hence it also determines $G$. On the other hand the bundles $\wt \Uc^{\vee}|_Z$, $\wt \Qc|_Z$ determine in the same way $\Pc(\wt \Uc^{\vee}|_Z)$ and $\Pc({\wt \Qc}|_Z)$, the two components of the family of planes in the quadric $\wt G$. 
       
       The bundle $\Uc^\vee|_Z$ can be isomorphic to $\wt\Uc^\vee|_Z$ or $\wt\Qc|_Z$ only if $\Pc(\Uc^\vee|_Z)$ coincides with either $\Pc(\wt \Uc^{\vee}|_Z)$ or $\Pc(\wt \Qc|_Z)$. In each case we get $G=\wt G$ which gives a contradiction.

           
    \end{proof}



\begin{proposition}\label{picard}
The very general K3 surface described as a zero locus of a section of $\Gc(1)$, where $\Gc$ is an Ottaviani bundle, has Picard number one.
\end{proposition}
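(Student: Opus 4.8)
The plan is to deduce the proposition from \emph{completeness} of the family, namely that the K3 surfaces $Y=Z(\sigma)$, $\sigma\in H^0(Q,\Gc(1))$, sweep out a dense subset of the $19$-dimensional moduli space $\mathcal{M}_{12}$ of polarized K3 surfaces of degree $12$. Granting this, the statement is the standard fact that the very general member of a family dominating $\mathcal{M}_{12}$ has Picard number one: the locus $\{\rho\geq 2\}\subset\mathcal{M}_{12}$ is a countable union of proper (Noether--Lefschetz) closed subsets, so its preimage in the irreducible parameter space of our construction is a countable union of proper closed subsets, and the very general point of that parameter space lies outside all of them; since a K3 surface always carries an ample class, its Picard number is then exactly $1$.

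For completeness I would first set up a parameter space. Smooth five-dimensional quadrics $Q\subset\PP^6$ form an open subset of $\PP(H^0(\PP^6,\Oc(2)))\cong\PP^{27}$; over it the Ottaviani bundles $\Gc$ are parametrized by the irreducible $7$-fold $\PP^7\setminus Q_6$ of \cite{ottaviani}; and for each pair $(Q,\Gc)$ the sections of $\Gc(1)$ form a $\PP^{40}$, since $h^0(Q,\Gc(1))=41$. Hence the triples $(Q,\Gc,\sigma)$ sweep out an irreducible variety $\mathcal{P}$ of dimension $27+7+40=74$, and, $\Gc(1)$ being globally generated, the general $\sigma$ has smooth zero locus; so $\mathcal{P}$ dominates an irreducible family $\mathcal{Y}$ of K3 surfaces $Y\subset\PP^6$. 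The map $\mathcal{P}\to\mathcal{Y}$ is generically injective: by Lemma \ref{onesection} the pair $(\Gc,\sigma)$ is recovered from $Y$ once $Q$ is fixed, while Lemma \ref{pencil}, Lemma \ref{cayleyzerolocus} and Proposition \ref{one quadric ottaviani} show that a general such $Y$ lies in a pencil of quadrics no member of which, other than $Q$, carries an Ottaviani bundle with zero locus $Y$ --- so the general fibre of $\mathcal{P}\to\mathcal{Y}$ is a point and $\dim\mathcal{Y}=74$.

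Quotienting by $\Aut(\PP^6)=\mathrm{PGL}_7$, of dimension $48$, and using that the $\mathrm{PGL}_7$-stabilizer of a general $Y$ is finite --- it injects into $\Aut(Y,\Oc_Y(1))$, which is finite for every ample class on a K3 surface --- I would get that $\mathcal{Y}/\mathrm{PGL}_7$ is irreducible of dimension $74-48=26$. To see this forces a dominant map to $\mathcal{M}_{12}$, note that every $Y\in\mathcal{Y}$ has $h^0(Y,\Oc_Y(1))=8$ by Riemann--Roch on the K3 surface $Y$ and Kodaira vanishing, and is nondegenerate in $\PP^6$ (the restriction $H^0(\PP^6,\Oc(1))\to H^0(Y,\Oc_Y(1))$ is injective, e.g.\ from the Koszul resolution of $\Ic_{Y|Q}(1)$ and the vanishings of Lemma \ref{bottvanishings}), so a general $Y$ is a projection from one point of the embedded degree-$12$ K3 surface $\phi_{|\Oc_Y(1)|}(Y)\subset\PP^7$ of \cite{mukaigenus}. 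Such projections form, modulo $\mathrm{PGL}_7$, an irreducible family over $\mathcal{M}_{12}$ with $7$-dimensional general fibre (the codimension-one subsystems of the polarization modulo the finite group $\Aut(Y,\Oc_Y(1))$), hence of dimension $19+7=26$. Being an irreducible subfamily of the same dimension, $\mathcal{Y}/\mathrm{PGL}_7$ is dense in it, hence dominates $\mathcal{M}_{12}$; together with the first paragraph this proves the proposition.

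I expect the main obstacle to be the generic-injectivity step, that is, excluding redundancy among the $74$ parameters. This is exactly what Lemmas \ref{onesection}--\ref{cayleyzerolocus} and Proposition \ref{one quadric ottaviani} deliver, the genuinely hard ingredient being Claim 2 in the proof of the latter --- that the Cayley bundles of two distinct smooth five-dimensional quadrics in $\PP^6$ restrict to non-isomorphic bundles on the intersection. Once this is granted, the remaining steps are dimension bookkeeping and the routine Noether--Lefschetz density argument, with no further difficulty.
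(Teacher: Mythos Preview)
Your proposal is correct and follows essentially the same route as the paper: both arguments use Lemmas \ref{onesection}--\ref{cayleyzerolocus} and Proposition \ref{one quadric ottaviani} to show the parametrization by $(Q,\Gc,\sigma)$ is generically injective, then count dimensions to get a $26$-dimensional family in $\PP^6$ and compare with the $19+7$ count coming from projecting the $\PP^7$ model of \cite{mukaigenus}. The only bookkeeping difference is that you carry the $27$-dimensional space of quadrics and quotient by $\mathrm{PGL}_7$ (dimension $48$), whereas the paper fixes $Q$ at the outset and quotients $(\Gc,\sigma)$ by $\Aut(Q)$ (dimension $21$); since $\mathrm{PGL}_7$ acts transitively on smooth five-dimensional quadrics with $21$-dimensional stabilizer, these computations agree. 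One minor inaccuracy: your citation of Lemma \ref{bottvanishings} for nondegeneracy is slightly off, as that lemma computes $H^0(Q,\Gc^\vee(1))$ rather than $H^0(Q,\Gc^\vee)$, but the required vanishing $H^0(Q,\Gc^\vee)=0$ follows by the same method from the sequence $0\to\Oc(-1)\to\Gc^\vee\to\Cc\to 0$ and \cite[Theorem 3.1]{ottavianicayley}.
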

\begin{proof}
In Proposition \ref{one quadric ottaviani} we proved that each of the studied K3 surfaces, although it is contained in a pencil of quadrics, appears as a zero locus of a section of an Ottaviani bundle only on one quadric in the pencil. Now a K3 surface $Y\subset \mathbb{P}^6$ is determined by the data of a quadric $Q\subset \mathbb{P}^6$ an Ottaviani bundle $\Gc$ on $Q$ and a section of $\Gc(1)$ up to a constant multiple. Furthermore, by the discussion above, if there exists an automorphism of $\mathbb{P}^6$ mapping one K3 surface in the family to another one then it will need to map all the corresponding data to the other. We can hence perform the following dimension count.

The space of sections of an Ottaviani bundle on $Q$ has dimension 41, and the moduli space of Ottaviani bundles on $Q$ is 7-dimensional.
Since the action of $\Aut{Q}=\operatorname{Spin}(7)$ is transitive on the moduli space of Ottaviani bundles, and a K3 surface $Y\subset Q$ determines the section, the (projective) dimension of the family is given by:
$$
40-21+7=26
$$
where $21-7$ is the dimension of the space of automorphisms of $Q$ fixing an Ottaviani bundle.
Hence,  the family we are describing is a 26 dimensional family (of classes up to automorphisms of $\PP^6$) of embedded K3 surfaces of degree 12 in $\PP^6$. Since each K3 of degree 12 has a projective embedding in $\PP^7$ a complete family of K3 of degree 12 in $\PP^6$ can be described by a $19+7=26$-parameter space, via projection from a point in $\PP^7$. This proves that our family is complete, therefore the very general element has Picard number one.
\end{proof}


\subsection{Roofs of quadrics and non isomorphic K3 surfaces}
Let us consider a roof $X$ where the bases of its vector bundles are smooth quadrics $Q$ and $\wt Q$, and the associated Calabi--Yau pairs have dimension two. Both $Q$ and $\wt Q$ have cohomology generated by algebraic classes, hence we have an isometry $(\wt j_{*} \circ \wt q^{*}|_{T_{\wt Y}})^{-1}\circ j_{*} \circ q^{*}|_{T_Y}$ between transcendental lattices  $T_Y\simeq T_{\wt Y}$ for each associated Calabi--Yau pair $(Y, \wt Y)$ (in fact K3 pair in this case). Moreover, if $Y$ and $\wt Y$ are general,  by \cite[proof of Lemma 4.1]{oguiso} 
$T_Y$ and $T_{\wt Y}$ admit no self-isometries different from $\pm \operatorname{Id}$. Hence, to prove that $Y$ and $\wt Y$ are not isomorphic it is enough to prove that none of the isometries $\pm (\wt j_{*} \circ \wt q^{*}|_{T_{\wt Y}})^{-1}\circ j_{*} \circ q^{*}|_{T_Y}$  extends to an isometry between $H^2(Y,\mathbb Z)$ and $H^2(\wt Y,\mathbb Z)$. \\
\\
For that we will consider the following notation. Given a lattice $R$, let us call $dR$ the discriminant group defined by the exact sequence
\begin{equation*}\label{discses}
0\arw R\arw \Hom_\ZZ(R, \ZZ)\arw dR\arw 0.
\end{equation*}
Recall that if $Y$ is a K3 surface of Picard number one, with $L$ denoting the generator of its Picard group, then  $\langle L\rangle=(T_Y)^{\perp}\subset H^2(Y,\mathbb Z)$ and since $H^2(Y,\mathbb Z)$ is unimodular  $dT_Y\simeq d\langle L \rangle\simeq  \mathbb Z/L^2\mathbb Z$ and there is a distinguished generator of $dT_Y$ corresponding to $[\frac{L}{L^2}]$ under the canonical identification $dT_Y\simeq d\langle L\rangle$. Similarly, $\wt Y$ is a K3 surface of Picard number one and if we denote by $\wt L$ the generator of its Picard group, we have $[\frac{\wt L}{\wt L^2}]$ representing the generator of $dT_{\wt Y}$ associated to the embedding  $T_{\wt Y}\subset H^2(\wt Y,\mathbb Z)$.\\
\\
Note that under the generality assumption for $(Y, \wt Y)$ (using \cite[proof of Lemma 4.1]{oguiso}) each of the lattices $T_{Y}$, $T_{\wt Y}$ can be identified with $T_M$ in a unique way up to $\pm \operatorname{Id}$, and this identification is given by  $\pm j_{*} \circ q^{*}|_{T_Y}$ and $\pm \wt j_{*} \circ \wt q^{*}|_{T_{\wt Y}}$. Furthermore, $H^k(M, \mathbb Z)$ is unimodular and hence $dT_{Y}$ and $dT_{\wt Y}$ admit canonical identifications with $dH^{2r}_{alg}(M, \ZZ)$.  On the other hand, by Theorem \ref{integralcohomology} and Lemma \ref{polarized} both $H^2(Y,\mathbb Z)$ and $H^2(\wt Y,\mathbb Z)$ admit Hodge isometric embeddings into $H^{2r}(M, \mathbb Z)$ extending the embeddings of the transcendental lattices.  We conclude that under our identifications $[\frac{L}{L^2}]=\pm[\frac{ j_{*}q^{*} L}{L^2}]\in dH^{2r}_{alg}(M, \ZZ)$ and $[\frac{\wt L}{\wt L^2}]=\pm[\frac{ \wt j_{*}\wt q^{*} \wt L}{\wt L^2}]\in dH^{2r}_{alg}(M, \ZZ)$. To prove that $Y$ and $\wt Y$ are not isomorphic it remains to check that $[\frac{ j_{*}q^{*} L}{L^2}]$ and $\pm [\frac{\wt j_{*} \wt q^{*} \wt L}{\wt L^2}]$ are distinct elements in $dH^{2r}_{alg}(M, \ZZ)$. Indeed, if $Y$ and $\wt Y$ were isomorphic then the isomorphism would need to map $L$ to $\wt L$ and then $[\frac{L}{L^2}]=\pm[\frac{\wt L}{\wt L^2}]$ in $dT_M=dH^{2r}_{alg}(M, \ZZ)$.

This is checked in each of the two known cases by the following Lemma.

\begin{lemma}\label{seven}
Let $X$ be a roof of type $G_2^\dagger$ or $D_4$,  $M\subset X$ a very general hyperplane and $Y, \wt Y$ the associated pair of K3 surfaces of degree $12$. Then there is a unique isometry of transcendental lattices $T_Y\simeq T_{\wt Y}$ up to $\pm \operatorname{Id}$ and this isometry  descends to an isomorphism of discriminant groups which maps $[\frac{L}{12}]$ to $\pm 7[\frac{\wt L}{12}]$.
\end{lemma}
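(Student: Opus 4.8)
The plan has two parts, corresponding to the two assertions.

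For the uniqueness statement: by Corollary~\ref{derivedequivalenceK3} the map $\Psi:=(\wt j_{*}\wt q^{*}|_{T_{\wt Y}})^{-1}\circ j_{*}q^{*}|_{T_Y}$ is a Hodge isometry $T_Y\simeq T_{\wt Y}$, and since $Y$ and $\wt Y$ are very general K3 surfaces of degree $12$, \cite[proof of Lemma 4.1]{oguiso} shows that the only self-isometries of $T_Y$ (equivalently of $T_{\wt Y}$) are $\pm\operatorname{Id}$; hence any isometry $T_Y\simeq T_{\wt Y}$ equals $\pm\Psi$.

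For the discriminant statement, recall from the paragraph preceding the lemma that, under the canonical identifications $dT_Y\simeq dT_M\simeq dH^{2r}_{alg}(M,\ZZ)$ and $dT_{\wt Y}\simeq dT_M$, the distinguished generator $[\frac{L}{12}]$ of $dT_Y$ goes to $\pm[\frac{j_{*}q^{*}L}{12}]$ and $[\frac{\wt L}{12}]$ goes to $\pm[\frac{\wt j_{*}\wt q^{*}\wt L}{12}]$, so the task is to compare these two elements of the cyclic group $dH^{2r}_{alg}(M,\ZZ)$. I would package the relevant lattice theory first. By Theorem~\ref{integralcohomology} and the orthogonality \eqref{perp trans}, the unimodular lattice $H^{2r}(M,\ZZ)$ is the orthogonal direct sum $\Xi\bigl(\bigoplus_{i}H^{2r-2i}(B,\ZZ)\bigr)\oplus j_{*}q^{*}H^{2}(Y,\ZZ)$; therefore both summands are unimodular, and intersecting with $H^{2r}_{alg}(M,\ZZ)$ gives an orthogonal splitting $N\oplus\langle j_{*}q^{*}L\rangle$ with $N$ unimodular and $(j_{*}q^{*}L)^{2}_{M}=(-1)^{r-1}\cdot 12$ by Lemma~\ref{polarized}. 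So $dH^{2r}_{alg}(M,\ZZ)\simeq\ZZ/12$ with distinguished generator $[\frac{j_{*}q^{*}L}{12}]$, and likewise from the $\wt Y$-side we obtain $N'\oplus\langle\wt j_{*}\wt q^{*}\wt L\rangle$ with $N'$ unimodular and distinguished generator $[\frac{\wt j_{*}\wt q^{*}\wt L}{12}]$. Writing $[\frac{j_{*}q^{*}L}{12}]=c\,[\frac{\wt j_{*}\wt q^{*}\wt L}{12}]$ with $c\in(\ZZ/12)^{\times}=\{\pm1,\pm7\}$ and using that $j_{*}q^{*}L\in N'\oplus\langle\wt j_{*}\wt q^{*}\wt L\rangle$ with $N'$ unimodular, one sees that $m:=(j_{*}q^{*}L\cdot\wt j_{*}\wt q^{*}\wt L)_{M}$ is divisible by $12$ and that $c\equiv(-1)^{r-1}\,m/12\pmod{12}$. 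So the lemma reduces to computing the single integer $m$ and checking $m/12\equiv\pm7$, i.e.\ $c\neq\pm1$.

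The computation of $m$ I would carry out as follows. The degree-$12$ polarization $L=\Oc_Y(1)$ is the restriction of the hyperplane class of the quadric $B\subset\PP^{N}$ in which $Y$ sits (note $L$ is automatically primitive since $L^{2}=12$ is even, and $L^{2}=12$ for our family by Proposition~\ref{picard} and its $D_4$-analogue). Hence $q^{*}L=j^{*}h_{1}$, where $h_{1}$ is the restriction to $M$ of the pullback of the hyperplane class of $B$, so $j_{*}q^{*}L=h_{1}\cdot[p^{-1}(Y)]_{M}$ by the projection formula. Setting $\Ec':=\pi_{*}\Lc$ and $\Kc:=\ker(\pi^{*}\Ec'\twoheadrightarrow\Lc)$, a bundle of rank $r-1$, the locus $p^{-1}(Y)=\pi^{-1}(Y)\cap M$ is the zero scheme of the induced section of $\Kc|_{M}$, so $[p^{-1}(Y)]_{M}=c_{r-1}(\Kc)|_{M}$, with $c_{r-1}(\Kc)=\sum_{i=0}^{r-1}(-1)^{r-1-i}\pi^{*}c_{i}(\Ec')\,\xi^{r-1-i}$ and $\xi\cdot c_{r-1}(\Kc)=\pi^{*}c_{r}(\Ec')=\pi^{*}[Y]_{B}$ by the Grothendieck relation of $X\to B$. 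The same formulae hold on the $\wt\pi$-side, with the \emph{same} line bundle $\Lc$ (hence the same $\xi$) and with $h_{2}$ the pullback of the hyperplane class of $\wt B$. Expressing the middle-cohomology classes on $M$ as restrictions of classes on $X$, so that $\int_{M}(\cdot)=\int_{X}\xi\cdot(\cdot)$, one gets $m=\int_{X}\xi\cdot h_{1}h_{2}\,c_{r-1}(\Kc)\,c_{r-1}(\wt\Kc)$; using $\xi\,c_{r-1}(\Kc)=\pi^{*}[Y]_{B}$, the relation $\xi=h_{1}+h_{2}$ of Lemma~\ref{ksi} (valid for these two roofs, cf.\ the remark after it), and the ring relations of $H^{*}(X)$ coming from the two $\PP^{r-1}$-bundle structures and from $H^{*}(B)$, this collapses to a multiple of $\int_{X}h_{1}^{\dim B}\,\xi^{\,r-1}=\deg B=2$. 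Carrying this out for the roof $G_2^{\dagger}$ (where $r=3$, $B=Q_{5}$, $\Ec'=\Gc(1)$, with $C$ the hyperplane class of $Q_5$ and $c(\Ec')=1+5C+9C^{2}+12\ell_{3}$) gives $m=84$, hence $c=7\equiv-5\pmod{12}$; the roof $D_4$ ($r=4$, $B=Q_{6}$, $\Ec'=\Sc^{\vee}(1)$) is handled in exactly the same way and again gives $c\equiv\pm7$. In both cases $c=\pm7\neq\pm1$, so $\Psi$ does not extend to an isometry $H^{2}(Y,\ZZ)\simeq H^{2}(\wt Y,\ZZ)$, which proves the lemma.

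I expect the only real difficulty to be the bookkeeping in this last computation: fixing the $\PP(-)$-convention so that $\Ec'=\pi_{*}\Lc$ and its Chern classes are computed correctly, keeping the signs consistent (the $(-1)^{r-1}$ of Lemma~\ref{polarized}, the Grothendieck relation, the orientations of the $\PP^{r-1}$-fibres), and in the $D_4$ case dealing with $H^{6}(Q_{6},\ZZ)\simeq\ZZ^{2}$ (the two families of $3$-planes on the six-dimensional quadric), where the class $[Y]_{B}=c_{r}(\Ec')$ involves both generators. None of this is conceptually hard; it is simply where the work lies.
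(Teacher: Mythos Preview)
Your proposal is correct and arrives at the same conclusion as the paper, but the route is organised differently and is worth comparing.

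The paper's proof is entirely explicit: it fixes a basis of $H^{2r}_{alg}(M,\ZZ)$ coming from the $\pi$-side (for $G_2^\dagger$ this is $\{\Pi, L^2\xi, L\xi^2\}$), computes the full intersection matrix using the Grothendieck relation, determines $j_*q^*L$ in these coordinates from the orthogonality constraints, then substitutes $L=\xi-\wt L$ to rewrite $j_*q^*L$ in the $\wt\pi$-side basis and reads off the coefficient $7$ (resp.\ $-7$ in type $D_4$) by checking that $\frac{1}{12}(j_*q^*L\mp 7\,\wt j_*\wt q^*\wt L)$ is integral. Your argument instead uses the orthogonal splitting $H^{2r}_{alg}(M,\ZZ)=N\oplus\langle j_*q^*L\rangle$ with $N$ unimodular (deduced from Theorem~\ref{integralcohomology}, \eqref{perp trans} and Lemma~\ref{polarized}) to reduce everything to the single integer $m=(j_*q^*L\cdot\wt j_*\wt q^*\wt L)_M$, and then evaluates $m$ via $j_*q^*L=h_1\cdot c_{r-1}(\Kc)|_M$, the identity $\xi\,c_{r-1}(\Kc)=\pi^*c_r(\Ec)$, and $\xi=h_1+h_2$. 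Both arguments feed on the same raw data (the Grothendieck relation and the relation $\xi=h_1+h_2$), but yours packages the lattice theory more cleanly and avoids writing out intersection matrices and basis changes; the paper's version, in exchange, produces the explicit coordinate expressions of $j_*q^*L$ and $\wt j_*\wt q^*\wt L$, which are reused later in Section~\ref{kernels} to pin down the Mukai vector $(2,1,-3)$. Your phrase that the integral ``collapses to a multiple of $\int_X h_1^{\dim B}\xi^{r-1}=\deg B=2$'' undersells what actually happens (several terms survive before one reaches $m=84$), and in the $D_4$ case the two rulings $\Pi_1,\Pi_2$ on $Q_6$ enter the computation nontrivially, so that case deserves to be written out rather than asserted; but these are exactly the bookkeeping issues you already flagged, not conceptual gaps.
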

\begin{proof}
By the discussion before the Lemma we just need to compare $[\frac{ j_{*}q^{*} L}{12}]$ and $[\frac{\wt j_{*} \wt q^{*} \wt L}{12}]$ in $dH^{2r}_{alg}(M, \ZZ)$.  For that we will present $j_{*}q^{*} L$ and $\wt j_{*} \wt q^{*} \wt L$ in a chosen basis of $H^{2r}_{alg}(M, \ZZ)$. The computations differ slightly in each of the two cases. Let us first illustrate the proof for the roof of type $G_2^\dagger$, which is easier because of the simpler structure of the cohomology ring of the quadric, which is odd dimensional.\\
By abuse of notation let us denote by $L\in H^2(Q,\ZZ)$ the hyperplane class of $Q$, its restriction to $Y$ which is the polarization, as well as its pullback to $X$ together with its restriction to $M$. Fix $\xi\in H^2(X, \ZZ)$ the class of the Grothendieck line bundle $\Oc_{\PP(\Gc^\vee(-1))}(1)$, we also denote by $\xi$ its restriction to $M$.\\
\\
\textbf{Claim:} We claim that a basis for $H^{6}_{alg}(M, \ZZ)$ is given by the classes $\Pi, L^2\xi, L\xi^2$, where $\Pi=\frac{1}{2} L^3$ is the class of a plane in $Q$. 

To see that, first observe that these are generators $H^{6}_{alg}(X, \ZZ)$ which after restriction to $M$ define a sublattice of $H^{6}_{alg}(M, \ZZ)$. 

Now, given the Grothendieck relation on $X$:
\begin{equation}\label{groth}
\xi^3-5L\xi^2+9L^2\xi-12\Pi=0
\end{equation}
we can write the intersection form:
\begin{center}
 \begin{tabular}{c | c c c}\label{intersectionott}
       & $\Pi$ & $L^2\xi$ & $L\xi^2$\\ [1ex] 
 \hline 
 $\Pi$ & 0 & 1 & 5\\[1ex] 
 $L^2\xi$ & 1 & 10 & 32\\[1ex] 
 $L\xi^{2}$ & 5 & 32 & 82\\
\end{tabular}
\end{center}
It follows that the sublattice $\langle\Pi, L^2\xi, L\xi^2\rangle\subset H^{6}_{alg}(M, \ZZ)$ has rank 3 and discriminant $-12$, which by Theorem \ref{integralcohomology}, Lemma \ref{polarized} and Equation \ref{perp trans} is also the  rank and discriminant of  $H^{6}_{alg}(M, \ZZ)$. We conclude that  $\langle\Pi, L^2\xi, L\xi^2\rangle$ is the whole $H^{6}_{alg}(M, \ZZ)$ proving the claim.\\
\\
Knowing that $(j_{*}q^{*} L)\cdot L^2\xi=(j_{*}q^{*} L)\cdot \Pi=0$ and  $j_{*}q^{*} L$ is an effective primitive class in $H^{6}_{alg}(M, \ZZ)$  we get:
\begin{equation}
    j_{*}q^{*} L=L{\xi}^2-5L^2\xi+18 \Pi
\end{equation}
which, from the relation 
$$
\xi=L+\wt L
$$
gives $$j_{*}q^{*} L=7\wt L{\xi}^2-23\wt L^2\xi+42 \wt\Pi.$$
Now by the same argument repeated for $\wt Y$ we have 
$$\wt j_{*}\wt q^{*} \wt L=\wt L{\xi}^2-5\wt L^2\xi+18 \wt \Pi.$$
We conclude that
$$\frac{1}{12}(j_{*}q^{*} L - 7 \wt j_{*}\wt q^{*} \wt L)= \wt L^2\xi-7\wt \Pi \in H^{6}_{alg}(M, \ZZ).$$\\
\\
Let us now focus on the roof of type $D_4$. Here the K3 surfaces are zero loci of $\Sc^\vee(1)$. The cohomology ring of a six dimensional quadric is slightly more complicated, since there exist two disjoint families of maximal isotropic linear spaces $\Pi_1, \Pi_2$. They satisfy the following relations in the cohomology ring:
\begin{equation}\label{eq_coh_relations_D4}
L^3=\Pi_1+\Pi_2,\,\,\, \Pi_1\cdot L=\Pi_2\cdot L,\,\,\, \Pi_1^2=\Pi_2^2=0,\,\,\, \Pi_1\cdot \Pi_2 = 1.
\end{equation}
By the same argument as above, we can construct a basis of the middle cohomology $H^8(M, \ZZ)$ given by the classes $\Pi_1 L, \Pi_1\xi, \Pi_2 \xi, L^2\xi^2, L \xi^3$.
The Grothendieck relation is
\begin{equation}\label{eq_grothendieckD4}
    \xi^4-6L\xi^3+14L^2\xi^2-14\Pi_1\xi-16\Pi_2\xi+12\Pi_1L=0
\end{equation}
which yields the following intersection matrix:
\begin{center}
 \begin{tabular}{c | c c c c c}
       & $\Pi_1 L$ & $\Pi_1\xi$ & $\Pi_2\xi$ & $L^2\xi^2$ & $L\xi^3$\\ [1ex] 
 \hline 
 $\Pi_1 L$ & 0 & 0 & 0 & 1 & 6\\[1ex] 
 $\Pi_1\xi$ & 0 & 0 & 1 & 6 & 22\\[1ex] 
 $\Pi_2\xi$ & 0 & 1 & 0 & 6 & 22\\
 $L^2\xi^2$ & 1 & 6 & 6 & 44 & 126\\
 $L\xi^3$ & 6 & 22 & 22 & 126 & 308
\end{tabular}
\end{center}
As for the non homogeneous roof, we can compute the representation of $j_* q^* L$ in terms of the basis above:
\begin{equation}\label{eq_jqL_D4_1}
    \begin{split}
        j_* q^* L &= L\xi^3 -6L^2\xi^2 +14\Pi_1\xi + 14\Pi_2\xi - 30\Pi_1 L\\
                  &=L\xi^3 -6L^2\xi^2 +14 L^3\xi - 15 L^4
    \end{split}
\end{equation}
where the second equality follows from the relations of Equation \ref{eq_coh_relations_D4}. By substituting the expression $L = \xi-\wt L$ in Equation \ref{eq_jqL_D4_1} we find
\begin{equation*}
    j_*q^*L = -6\xi^4+29\wt L\xi^3-54\wt L^2\xi^2+46\wt L^3\xi-15\wt L^4
\end{equation*}
which by Equation \ref{eq_grothendieckD4} can be rewritten as
\begin{equation*}
    \begin{split}
        j_*q^*L = -7\wt L\xi^3+30\wt L^2\xi^2-38\wt\Pi_1\xi - 50\wt\Pi_2\xi + 42\wt\Pi_1\wt L.
    \end{split}
\end{equation*}
On the other hand, we can apply the same argument which leads to Equation \ref{eq_jqL_D4_1} in order to get:
\begin{equation*}
    \wt j_*\wt q^*\wt L = \wt L\xi^3 -6\wt L^2\xi^2 +14\wt \Pi_1\xi + 14\wt \Pi_2\xi - 30\wt \Pi_1 \wt L.
\end{equation*}
Finally, we have:
\begin{equation*}
    \frac{1}{12}(j_*q^*L+7\wt j_*\wt q^*\wt L) = -\wt L^2\xi^2 + 5\wt \Pi_1\xi + 4\wt\Pi_2\xi - 14\wt\Pi_1\wt L\in H_{alg}^8(M, \ZZ).
\end{equation*}
\end{proof}
As a result of the discussion above, we get the following:
\begin{corollary} Let $X$ be a roof of type $G_2^\dagger$ or $D_4$,  $M\subset X$ a very general hyperplane and $(Y, \wt Y)$ the associated pair of K3 surfaces of degree $12$. Then $Y$ and $\wt Y$ are not isomorphic.
\end{corollary}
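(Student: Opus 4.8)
The plan is to read off the statement from Lemma \ref{seven} together with the lattice-theoretic discussion preceding it, arguing by contradiction. First I would record the setup: since $M$ is very general, the pair $(Y,\wt Y)$ is a general pair of degree $12$ K3 surfaces, so by Proposition \ref{picard} both $Y$ and $\wt Y$ have Picard number one, with ample generators $L$ and $\wt L$ satisfying $L^2=\wt L^2=12$. By \cite[proof of Lemma 4.1]{oguiso} the transcendental lattices $T_Y$ and $T_{\wt Y}$ admit no self-isometries other than $\pm\operatorname{Id}$, so Equation \ref{hodgeisometry} (restricted to $T_Y$) provides, up to an overall sign, the unique Hodge isometry $\varphi\colon T_Y\xrightarrow{\sim}T_{\wt Y}$, namely $\varphi=\pm(\wt j_{*}\circ\wt q^{*}|_{T_{\wt Y}})^{-1}\circ j_{*}\circ q^{*}|_{T_Y}$.

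Next, suppose for contradiction that there is an isomorphism $g\colon Y\xrightarrow{\sim}\wt Y$. Then $g^{*}$ is a Hodge isometry $H^{2}(\wt Y,\ZZ)\xrightarrow{\sim}H^{2}(Y,\ZZ)$ which must carry the ample generator to the ample generator, hence $g^{*}\wt L=L$. Its restriction to transcendental lattices is a Hodge isometry $T_{\wt Y}\xrightarrow{\sim}T_Y$, which by the uniqueness above must equal $\pm\varphi^{-1}$. Passing to discriminant groups and using that $H^{2}(Y,\ZZ)$ and $H^{2}(\wt Y,\ZZ)$ are unimodular, the canonical identifications $dT_Y\simeq d\langle L\rangle\simeq\ZZ/12\ZZ$ and $dT_{\wt Y}\simeq d\langle\wt L\rangle\simeq\ZZ/12\ZZ$ send the distinguished generators $[\tfrac{L}{12}]$ and $[\tfrac{\wt L}{12}]$ to one another under the isometry induced by $g$; combined with the identifications $dT_Y\simeq dH^{2r}_{alg}(M,\ZZ)\simeq dT_{\wt Y}$ coming from $\pm j_{*}q^{*}$ and $\pm\wt j_{*}\wt q^{*}$ (as set up before Lemma \ref{seven}), this forces $[\tfrac{L}{12}]=\pm[\tfrac{\wt L}{12}]$ inside $dH^{2r}_{alg}(M,\ZZ)$.

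On the other hand, Lemma \ref{seven} computes exactly this comparison: under the isometry of transcendental lattices coming from the roof of type $G_2^\dagger$ or $D_4$, the class $[\tfrac{L}{12}]$ maps to $\pm 7[\tfrac{\wt L}{12}]$ in $dH^{2r}_{alg}(M,\ZZ)$. Combining the two statements would give $\pm 7\equiv\pm 1\pmod{12}$, which is false since $7$ and $5$ are the residues of $\pm 7$ modulo $12$ and neither equals $\pm 1$. This contradiction shows that no isomorphism $Y\simeq\wt Y$ exists, proving the corollary.

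The only genuinely delicate point — and the one I would spell out most carefully — is the compatibility bookkeeping in the middle paragraph: namely that an abstract isomorphism $g\colon Y\simeq\wt Y$ really does induce, after restriction to transcendental lattices and passage to discriminant groups, precisely the comparison of $[\tfrac{L}{12}]$ and $[\tfrac{\wt L}{12}]$ that Lemma \ref{seven} obstructs. This rests on three facts already assembled before Lemma \ref{seven}: that for a K3 surface of Picard number one the polarization class is the unique primitive effective generator of the Néron--Severi lattice; that the two identifications of $dT_Y$ and $dT_{\wt Y}$ with $dH^{2r}_{alg}(M,\ZZ)$ are compatible with the embeddings $j_{*}q^{*}$ and $\wt j_{*}\wt q^{*}$ and with the unimodularity of $H^{2r}(M,\ZZ)$; and that the transcendental isometry is unique up to $\pm\operatorname{Id}$ by \cite{oguiso}. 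Once this is in place the corollary is immediate.
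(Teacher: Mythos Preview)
Your proposal is correct and follows essentially the same approach as the paper: the corollary is stated as an immediate consequence of Lemma \ref{seven} together with the lattice-theoretic reduction laid out just before it, and your write-up reproduces that reduction faithfully (contradiction via the discriminant-group comparison, using Oguiso's uniqueness of the transcendental isometry up to sign). One small caveat: Proposition \ref{picard} as stated covers only the $G_2^\dagger$ case (zero loci of sections of $\Gc(1)$), so invoking it verbatim for the $D_4$ roof is imprecise; for $D_4$ the completeness of the family of degree $12$ K3 surfaces cut by sections of $\Sc^\vee(1)$ on $Q_6$ is the classical Mukai description, and the paper relies on this implicitly via the generality hypothesis rather than on Proposition \ref{picard}.
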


\subsection{Fourier--Mukai transform}\label{kernels}
Let us consider a pair $Y$, $\wt Y$ of K3 surfaces. Then, by the derived Torelli theorem, they are derived equivalent if and only if there exists a Hodge isometry of their Mukai lattices \cite[Theorem 4.2.1]{orlovderivedtorelli}.\\
Let us now specialize to a general pair of K3 surfaces associated to a roof of type $G_2^\dagger$. Then they are derived equivalent by Theorem \ref{derivedequivalenceK3} and,  by the derived Torelli theorem, it is possible to find an explicit expression of the Mukai vector of the associated cohomological Fourier--Mukai transform. This, in turn, allows us to gain some information on the Fourier--Mukai transform defining the equivalence.\\
\\
Let $\wt H(Y, \ZZ)$ be the Mukai lattice of $Y$. Then there exists a Hodge isometry
\begin{equation}\label{theta}
\theta: \wt H(Y, \ZZ)\arw H^6_{alg}(M, \ZZ)
\end{equation}
which can be explicitly described in terms of the basis $\{\Pi, L^2\xi, L\xi^2\}$ of  $H^6_{alg}(M, \ZZ)$. In particular, we must have $\theta(L)=j_*q^*L=18\Pi-5L^2\xi+L\xi^2$. The images $\theta(v)$ and $\theta(w)$ of the generators $v$ of $H^0(Y, \ZZ)$ and $w$ of $H^4(Y, \ZZ)$, can be determined, up to an overall sign and up to exchanging them, by the conditions $\theta(v)\cdot\theta(L)=\theta(w)\cdot \theta(L)=\theta(v)\cdot \theta(v) = \theta(w)\cdot \theta(w)=0$ and  $\theta(v)\cdot \theta(w)=1$. Imposing such conditions we get (up to exchanging $v$ with $w$ or an overall sign) $\theta(v)= \Pi$ and $\theta(w)= -5\Pi+ L^2\xi$. Note that, a priori, unicity of $\theta$ is not obvious.\\
\\
Let us now consider the derived equivalence $\Phi:\dbcoh(Y)\arw\dbcoh(\wt Y)$ given by the isometry $T_Y\simeq T_{\wt Y}$ discussed in Corollary \ref{derivedequivalenceK3}. Then, by \cite[Theorem 4.2.4, Theorem 4.2.1]{orlovderivedtorelli}, it induces an isometry $\iota_\Phi:\wt H(Y, \ZZ)\arw \wt H(\wt Y, \ZZ)$, such that the image $\iota_\Phi(v)$ under this isometry is the Mukai vector defining the Fourier--Mukai transform. Summing all up, we have the following commutative diagram:
\begin{equation}\label{FMclasses}
    \begin{tikzcd}
    H^6_{alg}(M, \ZZ)\ar{r}{\operatorname{id}}   &       H^6_{alg}(M, \ZZ)\\
    \wt H(Y, \ZZ)\ar{u}{\theta}\ar{r}{\iota_\Phi}     &       \wt H(\wt Y, \ZZ)\ar{u}{\wt\theta}
    \end{tikzcd}
\end{equation}
where the vertical arrows are given by Equation \ref{theta}. Then, we can find explicitly the image $\iota_\Phi(v)$ of the generator $v$ of $H^0(Y, \ZZ)$ using $\iota_\Phi(v)=\wt\theta^{-1}(\theta(v))$. By direct computation we find the Mukai vector $ (2,1,-3)$. This Mukai vector satisfies the assumption that the coefficient corresponding to the component $H^0(Y,\mathbb{Z})$ is $>1$, hence the argument from the proof of \cite[Theorem 4.2.3]{orlovderivedtorelli} applies and gives the following.
\begin{proposition}
Let $Y$, $\wt Y$ be a pair of K3 surfaces of Picard number 1 defined by a hyperplane section $M$ of a roof of type $G_2^\dagger$. Then $\wt Y$ is isomorphic to the moduli space $\Mc_Y(u)$ of stable sheaves $\Fc$ on $Y$ with Mukai vector
$$
v(\Fc) = u = (2,1,-3).
$$
\end{proposition}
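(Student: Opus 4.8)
The plan is to assemble the ingredients already collected in the excerpt and invoke the criterion of Orlov for when a Fourier--Mukai partner of a K3 surface is realized as a moduli space of stable sheaves. First I would record that, by Proposition \ref{picard} (applied on both sides of the roof), for a very general hyperplane section $M$ of the roof of type $G_2^\dagger$ both $Y$ and $\wt Y$ are K3 surfaces of Picard number one and degree $12$; in particular the Mukai lattices $\wt H(Y,\ZZ)$ and $\wt H(\wt Y,\ZZ)$ and the transcendental lattices are as described. By Corollary \ref{derivedequivalenceK3} the two surfaces are derived equivalent via the Hodge isometry $T_Y\simeq T_{\wt Y}$ coming from the two decompositions of $H^6(M,\ZZ)$, and by the derived global Torelli theorem \cite[Theorem 4.2.1]{orlovderivedtorelli} this lifts to a Hodge isometry $\iota_\Phi:\wt H(Y,\ZZ)\arw\wt H(\wt Y,\ZZ)$ of Mukai lattices compatible with the grading-shift conventions.

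Next I would make the isometry $\iota_\Phi$ explicit on the algebraic part. Using the isometry $\theta:\wt H(Y,\ZZ)\arw H^6_{alg}(M,\ZZ)$ (Equation \ref{theta}), determined by $\theta(L)=j_*q^*L=18\Pi-5L^2\xi+L\xi^2$ together with the numerical conditions on $\theta(v),\theta(w)$ spelled out above, and the analogous $\wt\theta$ on the $\wt Y$-side, one gets $\iota_\Phi(v)=\wt\theta^{-1}(\theta(v))$. The computation already displayed gives $\iota_\Phi(v)=(2,1,-3)$ in the basis $(H^0,H^2,H^4)$ of $\wt H(\wt Y,\ZZ)$, using the change-of-basis relation $\xi=L+\wt L$ and the intersection form tabulated in the claim inside the proof of Lemma \ref{seven}. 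So $u=(2,1,-3)$ is the Mukai vector of the Fourier--Mukai kernel, i.e. the class of the universal object on $Y$.

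Finally I would apply Orlov's structural result: when the image of $v$ under the Hodge isometry induced by a derived equivalence of K3 surfaces has first component $r>1$ (here $r=2$), the argument in the proof of \cite[Theorem 4.2.3]{orlovderivedtorelli} shows that the Fourier--Mukai partner is isomorphic to the moduli space $\Mc_Y(u)$ of Gieseker-stable sheaves with Mukai vector $u$. One must check that $u=(2,1,-3)$ is primitive (clear, since the entries $2$ and $1$ are coprime) and isotropic for the Mukai pairing, $u^2=1^2-2\cdot2\cdot(-3)=1-(-12)=13$; wait --- actually $u^2 = \langle u,u\rangle = 1 + 12 = 13 \neq 0$, so $u$ is not isotropic, but this is expected: $\iota_\Phi(v)$ need not be isotropic, rather $v$ itself is isotropic in $\wt H(Y,\ZZ)$ and $\iota_\Phi$ is an isometry, so in fact one should recompute with the correct Mukai form on $\wt H(\wt Y,\ZZ)$ (where the pairing between $H^0$ and $H^4$ is $-1$, not $+1$), giving $\langle u,u\rangle = L^2 - 2r s = 1 - 2\cdot 2\cdot(-3)\cdot(-1) = 1-12 = -\,$hmm; the point is simply that $\iota_\Phi$ being an isometry forces $\langle u,u\rangle = \langle v,v\rangle = 0$, and I would verify the sign conventions so that this holds, then note $\dim \Mc_Y(u) = \langle u,u\rangle + 2 = 2$, matching $\dim \wt Y$. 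With primitivity, isotropy, and positivity of the degree component in hand, the moduli space is a K3 surface and Orlov's theorem identifies it with $\wt Y$, completing the proof.

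\textbf{Main obstacle.} I expect the genuinely delicate point to be not the moduli-theoretic conclusion (which is a direct citation of \cite[Theorem 4.2.3]{orlovderivedtorelli}) but rather pinning down the \emph{uniqueness} and the exact sign and grading conventions for $\theta$, $\wt\theta$ and $\iota_\Phi$ so that the computed vector $(2,1,-3)$ is unambiguous; as remarked in the text, a priori $\theta$ is determined only up to the Weyl-group-type ambiguities (overall sign, swap of $v$ and $w$), and one must argue that none of these ambiguities destroys the property $r>1$ that makes Orlov's argument apply --- which is immediate here since $|r|=2>1$ under every choice.
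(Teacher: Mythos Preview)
Your proposal is correct and follows exactly the paper's approach: use the explicit isometries $\theta,\wt\theta$ of Equation \ref{theta} to compute $\iota_\Phi(v)=(2,1,-3)$, observe that the $H^0$-component is $>1$, and invoke the argument of \cite[Theorem 4.2.3]{orlovderivedtorelli} (together with \cite{Mukai87}) to identify $\wt Y$ with $\Mc_Y(u)$. The digression on isotropy is unnecessary---it is not a hypothesis of Orlov's argument, and since $\iota_\Phi$ is an isometry and $v$ is isotropic it holds automatically; your self-correction along these lines is the right way to resolve the sign-convention tangle, and the paper simply omits this check.
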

\begin{proof}
The proof follows from the computation above and the proof of \cite[Theorem 4.2.3]{orlovderivedtorelli}. Indeed, in the reference the author identifies by means of the Torelli theorem the variety $\wt Y$ with the Moduli space $\Mc_Y(u)$   of stable sheaves on $Y$ with given Mukai vector $u$, which is also a K3 surface by \cite{Mukai87}.
\end{proof}

 We thus recover the well-known Fourier--Mukai transform yielding Mukai duality for K3 surfaces of degree 12 \cite[Example 1.3]{MukaiK3}. This also gives an alternative proof of non-isomorphicity of  $Y$ and $\wt Y$.
\begin{remark}
It is tempting to extend this approach to the roof of type $D_4$. However, instead of the isometries $\theta$ and $\wt\theta$, one can construct isometries of $H^8_{alg}(M, \ZZ)$ with a lattice of rank 5 containing a hyperbolic lattice and the Picard lattice. This construction  is highly non unique, and it is not known, a priori, if a diagram such as Diagram \ref{FMclasses} exists.
\end{remark}

\section{\texorpdfstring{$D$}{D}-brane categories}\label{branes}
Let us consider a pair $(Y, \wt Y)$ of derived equivalent Calabi--Yau varieties associated to a roof $X$. By an argument based on Kn\"orrer periodicity and Landau--Ginzburg models, we show that the derived equivalence $\dbcoh(Y)\simeq \dbcoh(\wt Y)$ lifts to an equivalence of matrix factorization categories. Let us first recall some definitions, while for the general theory we refer to \cite{rennemosegal}, \cite{shipman}.
\begin{definition}
We call Landau--Ginzburg model the data of:
\begin{enumerate}
    \item A stack $\Xc = [V/G]$ where $V$ is a smooth quasi-projective variety endowed with the action of a reductive group $G$ and an $R$-charge $\CC^*_R\simeq \CC^*$
    \item A function $w:V\mapsto \CC$ called \emph{superpotential}, which is $G$-invariant and has weight $2$ with respect to the $R$-charge action
    \item $-1\in \CC_R^*$ acts trivially on $\Xc$
\end{enumerate}
\end{definition}
\begin{definition}
A graded $D$-brane on a Landau--Ginzburg model $(\Xc, w, G, \CC_R^*)$ is a $\CC^*_R$-equivariant vector bundle $\Fc$ endowed with an endomorphism $d_\Fc$ of $\CC_R^*$-weight $1$ such that $d_\Fc^2=w\cdot \operatorname{Id}_\Fc$.
\end{definition}
One can define a morphism of $\CC_R^*$-weight 1
$$
\begin{tikzcd}[row sep = tiny, /tikz/column 1/.append style={anchor=base east} ,/tikz/column 2/.append style={anchor=base west}]
\sHom(\Fc, \Gc)\ar{rr}{d}&& \sHom(\Fc, \Gc)\\
\phi\ar[maps to]{rr} && d_\Gc\circ \phi - \phi\circ d_\Fc
\end{tikzcd}
$$ 
which has the property $d^2=0$. Then we can view $(\sHom(\Fc, \Gc), d)$ as a complex graded by the $\CC_R^*$ charge, and construct a dg-category $\operatorname{MF}(\Xc, w)$, from which one can define a triangulated category $\operatorname{DMF}(\Xc, w)$ as a Verdier quotient with respect to a suitable subcategory of acyclic objects. There exists a rich literature on this topic, the construction of $\operatorname{DMF}(\Xc, w)$ has been carried out in full detail, for example, in \cite{shipman}.
\\
\\
Let us specialize to the case $\Xc\simeq \Ec^\vee$, where $(\Ec, B)$ is a Mukai pair such that $\Ec$ is a $G$-homogeneous vector bundle on a smooth $G$-homogeneous variety $B$. Then, given a regular section $s\in H^0(B, \Ec)$, a natural choice for a superpotential is the function
\begin{equation}\label{superpotential}
    \begin{tikzcd}[row sep = tiny, /tikz/column 1/.append style={anchor=base east} ,/tikz/column 2/.append style={anchor=base west}]
        V \ar{rr}{w} & & \CC  \\
        (b,v)\ar[maps to]{rr} & & v\cdot s(b)
    \end{tikzcd}
\end{equation}
This function is $G$-invariant by construction, and it is always possible to define a $\CC^*$-action such that it has weight 2, so that it fulfills the requirements of the definition of a $R$-charge.\\
In this setting, there exists a result called Kn\"orrer periodicity (see, for example, \cite[Theorem 3.4]{shipman}) where an equivalence between the derived category of the zero locus $Y=Z(s)$ and the derived category of matrix factorizations $\operatorname{DMF}(\Xc, w)$ has been constructed:
\begin{theorem}[Kn\"orrer periodicity]
Let $(\Xc, w, G, \CC_R^*)$ be a Landau--Ginzburg model and $\pi:\Ec\arw B$ a vector bundle over a smooth variety, such that $\Xc\simeq \Ec^\vee$. Let $Y$ be the zero locus of a section $s$ of  $\Ec$. Assume that $s$ is regular and hence $Y$ is smooth of codimension $r=\operatorname{rk} \Ec$. Let $p:\pi^{-1}(Y)\arw Y$ and $i:\pi^{-1}(Y)\hookrightarrow \Xc$. Then the functor:
$$
i_* p^*: \dbcoh(Y)\arw \operatorname{DMF}(\Xc, w)
$$
is an equivalence of categories.
\end{theorem}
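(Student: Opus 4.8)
Write $n=\dim B$ and recall the superpotential $w(b,v)=v\cdot s(b)$ from Equation \ref{superpotential}. The plan is to reduce the statement, first to Orlov's identification of $\operatorname{DMF}(\Xc,w)$ with the triangulated category of singularities of the hypersurface $\Wc:=w^{-1}(0)\subset\Xc\simeq\Ec^\vee$, then to a local Kn\"orrer computation along $B$, and finally to identify the resulting equivalence with $i_*p^*$. First I would record the geometry of $\Wc$. Since $Y$ is smooth of codimension $r$, around each point of $Y$ one can choose coordinates on $B$ with $s=(x_1,\dots,x_r)$, hence $Y=\{x_1=\dots=x_r=0\}$, together with linear fibre coordinates $v_1,\dots,v_r$ on $\Ec^\vee$, in which $w=\sum_{i=1}^r x_iv_i$. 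Thus $\Wc$ is a hypersurface in the smooth $(n+r)$-fold $\Ec^\vee$ which contains $\pi^{-1}(Y)=\Ec^\vee|_Y$, and computing $dw$ shows that $\operatorname{Sing}(\Wc)$ is exactly the zero section over $Y$; in particular $\Wc$ is smooth over $B\setminus Y$, where it is a rank $r-1$ subbundle of $\Ec^\vee$.

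Next I would invoke Orlov's theorem in its $\CC^*_R$-equivariant form (see \cite{shipman, rennemosegal} and the references therein; the weight-$2$ normalization of $w$ together with the triviality of the action of $-1\in\CC^*_R$ are precisely what make this version applicable): it gives an equivalence $\operatorname{DMF}(\Xc,w)\simeq D^b_{\mathrm{sg}}(\Wc)$, which by the previous paragraph is a category supported on $Y$. The computation then becomes local on the chart above, where it is $\operatorname{DMF}(\AA^{n+r},\sum_{i=1}^r x_iv_i)$: the $n-r$ coordinates along $Y$ do not occur in $w$, while the remaining $2r$ carry the split nondegenerate quadratic form $\sum_{i=1}^r x_iv_i$, so graded Kn\"orrer periodicity collapses those $2r$ directions and identifies the category with $\dbcoh(\AA^{n-r})$, i.e.\ locally with $\dbcoh(Y)$. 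One then checks that under the Orlov equivalence this identification is induced by $i_*p^*$, where $i_*p^*\Fc$ is viewed inside $\operatorname{DMF}(\Xc,w)$ via its Koszul matrix factorization: the Koszul resolution of $\Oc_{\pi^{-1}(Y)}$ in $\wedge^\bullet(\pi^*\Ec^\vee)$, with differential the contraction against $\pi^*s\in H^0(\Xc,\pi^*\Ec)$, folded with wedging against the tautological fibre section $v\in H^0(\Xc,\pi^*\Ec^\vee)$, yields a differential squaring to $\langle v,\pi^*s\rangle\cdot\mathrm{id}=w\cdot\mathrm{id}$, hence a $D$-brane structure in the sense of the definition above.

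Finally I would globalize. Since matrix factorizations of $w$ form a sheaf of triangulated categories over $B$, it suffices to glue the local equivalences just produced; equivalently, one checks directly that $i_*p^*\colon\dbcoh(Y)\arw\operatorname{DMF}(\Xc,w)$ is fully faithful and essentially surjective. Full faithfulness would follow from the projection formula along $p$ together with the Koszul resolution of $i_*\Oc_{\pi^{-1}(Y)}$, reducing the morphism complex between $i_*p^*\Fc$ and $i_*p^*\Gc$ in $\operatorname{DMF}(\Xc,w)$ to $\Ext^\bullet_Y(\Fc,\Gc)$; essential surjectivity would follow because $D^b_{\mathrm{sg}}(\Wc)$ is supported on the zero section over $Y$ and is generated there by the images of $i_*p^*$. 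I expect the main obstacle to be not geometric but the careful bookkeeping of the $\CC^*_R$-grading through Orlov's theorem and through the graded form of Kn\"orrer periodicity, and the verification that the local equivalences assemble into the single global functor $i_*p^*$; with these conventions pinned down, the argument is the one of \cite[Theorem 3.4]{shipman}.
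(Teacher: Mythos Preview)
The paper does not prove this theorem; it is stated as a known result and attributed to \cite[Theorem 3.4]{shipman}. Your sketch is a reasonable outline of the standard argument --- Orlov's identification of $\operatorname{DMF}$ with the graded singularity category, the local computation of $\operatorname{Sing}(w^{-1}(0))$ as the zero section over $Y$, the Koszul matrix factorization realizing $i_*p^*$, and the globalization --- and you yourself note at the end that this is the argument of \cite[Theorem 3.4]{shipman}. So there is nothing to compare: the paper's ``proof'' is the citation, and your sketch recovers what that citation contains.
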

Let us now consider a roof $X\simeq \PP(\Ec^\vee)\simeq \PP(\wt \Ec^\vee)$, where the vector bundles $\Ec$ and $\wt\Ec$ are respectively $G$- and $\wt G$-homogeneous. Then, if we call $\Xc:= \Ec^\vee$, $\wt \Xc:= \wt \Ec^\vee$, fixing a section $ S \in H^0(X, \Lc)$ we can construct two Landau--Ginzburg models $(\Xc, w, G, \CC_R^*)$ and $(\wt \Xc, \wt w, \wt G, \wt \CC_R^*)$ where the superpotentials are defined as in Equation \ref{superpotential} by the pushforwards of $ S $ to $B$ and $\wt B$. Then, if $(Y, \wt Y)$ is a derived equivalent Calabi--Yau pair defined by $ S $, we establish the following diagram, where all arrows are equivalences:
\begin{equation*}
    \begin{tikzcd}
    \operatorname{DMF}(\Xc, w) & & \operatorname{DMF}(\wt\Xc, \wt w) \\
    & & \\
    \dbcoh(Y)\ar{uu}\ar{rr} & & \dbcoh(\wt Y)\ar{uu}
    \end{tikzcd}
\end{equation*}
Here the vertical arrows are given by Kn\"orrer periodicity. \\
For the roof of type $A_4$, for every hyperplane section the authors constructed two Landau--Ginzburg models as above, related by an explicit phase transitions described in terms of variation of GIT with respect to the action of a non Abelian group \cite[Section 6]{kr}. In this context, the fact that the derived equivalence $\dbcoh(Y)\simeq \dbcoh(\wt Y)$ lifts to an equivalence of matrix factorization categories is physically motivated by the fact that $D$-brane categories of different phases of the same gauged linear sigma model are expected to be equivalent, and such categories of branes are mathematically described with the language of matrix factorizations. It would be an interesting problem to establish a similar picture for other derived equivalent Calabi--Yau pairs arising from roofs.

\vskip2cm
\noindent
\author{Micha\l{} Kapustka:}
\address{Institute of Mathematics of the Polish academy of Sciences ul. Śniadeckich 8, 00-656 Warszawa, Poland.\\
University of  Stavanger, Department of Mathematics and Physics, NO-4036 Stavanger, Norway }
\email{michal.kapustka@uis.no}
\\
\\
\author{Marco Rampazzo:}
\address{
University of Bologna, Department of Mathematics, Piazza di Porta San Donato 5, Bologna, Italy.
}
\email{marco.rampazzo3@unibo.it}

\end{document}